\newtheorem{theorem}{Theorem}[section]
\newtheorem{proposition}[theorem]{Proposition}
\newtheorem{lemma}[theorem]{Lemma}
\newtheorem{corollary}[theorem]{Corollary}
\theoremstyle{remark}
\newtheorem{example}[theorem]{Example}
\newtheorem{remark}[theorem]{Remark}
\definecolor{tomato}{rgb}{1,.25,.19}
\newcommand{\mtam}[2]{\ifstrequal{#2}{1}{\mathcal{T}_{#1}}{\mathcal{T}_{#1}^{(#2)}}}
\newcommand{\ps}[2]{\text{ps}_{#1}(#2)}
\newcommand{\alert}[1]{{\color{DarkGreen}\emph{#1}}}
\newcommand{\mdyck}[2]{
	\begin{tikzpicture}
		\draw(0,0) -- (0,.2) -- (#1,.2) -- (#1,.4) -- (#2,.4) -- (#2,.6) -- (1.2,.6);
		\draw[dashed](0,0) -- (1.2,.6);
	\end{tikzpicture}
}
\author{Henri M\"uhle}
\address{Fak. f\"ur Mathematik, Universit\"at Wien, Oskar-Morgenstern-Platz 1, 1090 Wien, Austria}
\email{henri.muehle@univie.ac.at}
\thanks{This work was funded by the FWF Research Grant No. Z130-N13.}
\title{The Topology of the $m$-Tamari Lattices}
\keywords{Tamari lattice, m-Tamari lattice, Fu{\ss}-Catalan combinatorics, EL-shellability, M{\"o}bius function}
\subjclass[2010]{06A07 (primary), and 05E99 (secondary)}
\begin{document}

\begin{abstract}
	The $m$-Tamari lattices $\mathcal{T}_{n}^{(m)}$ were recently introduced by Bergeron and Pr{\'e}ville-Ratelle as
	posets on $m$-Dyck paths, and it was shown by Bousquet-M{\'e}lou, Fusy and Pr{\'e}ville-Ratelle that
	these lattices form intervals in the classical Tamari lattice $\mathcal{T}_{nm}$. It follows from a theorem by 
	Bj{\"o}rner and Wachs and a basic property of EL-shellable posets, that the $m$-Tamari lattices are EL-shellable. In this
	article, we define a new EL-labeling of the $m$-Tamari lattices completely in terms of $m$-Dyck paths. 
	With the help of this labeling, we compute the values of the M{\"o}bius function of $\mathcal{T}_{n}^{(m)}$, and we 
	characterize the intervals of $\mathcal{T}_{n}^{(m)}$ according to their topological properties.
\end{abstract}

\maketitle

\section{Introduction}
  \label{sec:introduction}
The classical Tamari lattices $\mtam{n}{1}$ as introduced in \cite{tamari62algebra}, are a well-studied 
member of the large group of Catalan objects. Bergeron and Pr{\'e}ville-Ratelle 
\cite{bergeron12higher} have recently generalized this class of lattices to $m$-Tamari lattices 
$\mtam{n}{m}$ in order to calculate the graded Frobenius characteristic of the space of higher 
diagonal harmonics. Along with this generalization, they proposed a realization of these lattices 
via $m$-Dyck paths, analogously to the realization of the classical Tamari lattices via 
classical Dyck paths. In \cite{bjorner97shellable}*{Theorem~9.2} it was shown that the classical Tamari
lattices are EL-shellable, and it is the statement of \cite{bousquet11number}*{Proposition~4} that 
$\mtam{n}{m}$ is an interval in $\mtam{nm}{1}$. Since intervals of EL-shellable posets are again 
EL-shellable, see \cite{bjorner80shellable}*{Proposition~4.2}, it is immediate that $\mtam{n}{m}$ is 
EL-shellable. The purpose of this article is to use the EL-shellability of $\mtam{n}{m}$ to compute
the M{\"o}bius function of $\mtam{n}{m}$ and to characterize the intervals of $\mtam{n}{m}$ according to
the value of their topological properties. These results are summarized in the following theorem.

\begin{theorem}\label{thm:shellability_m_tamari}
	The $m$-Tamari lattices $\mtam{n}{m}$ are EL-shellable for every positive integer $m$ and $n$.
	Moreover, the M{\"o}bius function of $\mtam{n}{m}$ takes values only in $\{-1,0,1\}$. In particular,
	every closed interval of $\mtam{n}{m}$ is homotopy equivalent to either a sphere or a point.
\end{theorem}

It is the purpose of this article to give a new, independent proof of this theorem, which works completely
within the framework of $m$-Dyck paths. It shall be remarked, that another, independent proof of
Theorem~\ref{thm:shellability_m_tamari} can be deduced from \cite{liu99left}. Liu gave a different 
EL-labeling of $\mtam{n}{1}$, which implies with \cite{mcnamara06poset}*{Theorem~3} that 
$\mtam{n}{1}$ is left-modular. Since left-modularity is a property which is inherited to intervals, see
\cite{liu99left}, the $m$-Tamari lattices are left-modular as well, and thus admit a natural EL-labeling.
Moreover, Markowsky showed in \cite{markowsky92primes}*{Theorem~22} that $\mtam{n}{1}$ is an
extremal lattice. Thus the result on the topology of $\mtam{n}{m}$ can be deduced from Thomas' results on 
trim lattices in \cite{thomas06analogue} or directly from Pallo's computation of the M{\"o}bius function of
$\mtam{n}{1}$ in \cite{pallo93algorithm}.

\smallskip

We recall the construction of $m$-Tamari lattices, as well as the
definition of EL-shellability of a poset in Section~\ref{sec:preliminaries}. In 
Section~\ref{sec:m_tamari_shellability} we give an EL-labeling of 
$\mtam{n}{m}$ by generalizing a construction from \cite{bjorner97shellable}, thus proving the first
part of Theorem~\ref{thm:shellability_m_tamari}. We conclude this article by proving the second part of
Theorem~\ref{thm:shellability_m_tamari} in Section~\ref{sec:applications}, where we characterize the intervals of 
$\mtam{n}{m}$ according to their topological properties.

\section{Preliminaries}
  \label{sec:preliminaries}
The definition of the $m$-Tamari lattices follows \cite{bergeron12higher}. For more background on EL-shellable 
posets, we refer to \cites{bjorner96shellable,bjorner97shellable}. 

\subsection{Generalized Tamari Lattices}
  \label{sec:tamari}
Let $\mathbf{a}=(a_{1},a_{2},\ldots,a_{n})$ be a sequence of integers which satisfies the conditions
\begin{align}
	\label{eq:mdyck_one}
	& a_{1}\leq a_{2}\leq\cdots\leq a_{n}\quad\text{and}\\
	\label{eq:mdyck_two}
	& a_{i}\leq m(i-1),\quad 1\leq i\leq n,
\end{align}
and denote by $\mathcal{D}_{n}^{(m)}$ the set of all these sequences.

\begin{remark}
	The sequences defined before have the following combinatorial interpretation. An \alert{$m$-Dyck path 
	of height $n$} is a path from $(0,0)$ to $(mn,n)$ in $\mathbb{N}\times\mathbb{N}$ which stays above 
	the line $x=my$, and which consists only of steps of the form $(0,1)$, so-called \alert{right-steps}, 
	or steps of the form $(1,0)$, so-called \alert{up-steps}. Given a sequence 
	$\mathbf{a}=(a_{1},a_{2},\ldots,a_{n})$ satisfying conditions \eqref{eq:mdyck_one} and 
	\eqref{eq:mdyck_two}, we can associate an $m$-Dyck path to $\mathbf{a}$ in which the $i$-th up-step 
	is followed by $a_{i+1}-a_{i}$ right-steps.	
\end{remark}
  
It is well-known (see for instance \cite{dvoretzky47problem}) that the number of $m$-Dyck paths of height $n$
is counted by the \alert{Fu{\ss}-Catalan number}
\begin{align}
	\mbox{Cat}^{(m)}(n)=\frac{1}{mn+1}\binom{(m+1)n}{n},
\end{align}
and thus it follows that $\bigl\lvert\mathcal{D}_{n}^{(m)}\bigr\rvert=\mbox{Cat}^{(m)}(n)$. 

\begin{figure}[htb]
	\centering
	\subfigure[A $4$-Dyck path of height $6$ associated to the sequence $(0,0,3,4,4,16)$.]{
	  \label{fig:good_example}
		\begin{tikzpicture}\small
			\draw[step=.4,gray!50!white,very thin](0,0) grid (9.6,2.4);
			\draw(0,0) -- (9.6,2.4);
			\draw(0,0) -- (0,.8) -- (1.2,.8) -- (1.2,1.2) -- (1.6,1.2) -- (1.6,2) -- 
			  (6.4,2) -- (6.4,2.4) -- (9.6,2.4);
			\draw(0,0) node[label=below:$0$]{};
			\draw(1.2,0) node[label=below:$3$]{};
			\draw(1.6,0) node[label=below:$4$]{};
			\draw(6.4,0) node[label=below:$16$]{};

			\draw[dotted] (0,.5) -- (7.6,2.4);
		\end{tikzpicture}}
	\subfigure[The sequence $(0,6,8,10,12,21)$ does not encode a $4$-Dyck path.]{
		\begin{tikzpicture}\small
			\draw[step=.4,gray!50!white,very thin](0,0) grid (9.6,2.4);
			\draw(0,0) -- (9.6,2.4);
			\draw(0,0) -- (0,.4) -- (2.4,.4) -- (2.4,.8) -- (3.2,.8) -- (3.2,1.2) -- 
			  (4,1.2) -- (4,1.6) -- (4.8,1.6) -- (4.8,2) -- (8.4,2) -- (8.4,2.4) -- 
			  (9.6,2.4);
			\draw(0,0) node[label=below:$0$]{};
			\draw(2.4,0) node[label=below:$6$]{};
			\draw(3.2,0) node[label=below:$8$]{};
			\draw(4,0) node[label=below:$10$]{};
			\draw(4.8,0) node[label=below:$12$]{};
			\draw(8.4,0) node[label=below:$21$]{};
			\filldraw[fill=tomato](1.6,.4) -- (2.4,.4) -- (2.4,.6) -- cycle;
			\filldraw[fill=tomato](8,2) -- (8.4,2) -- (8.4,2.1) -- cycle;
		\end{tikzpicture}}
	\caption{Some examples of paths and sequences.}
	\label{fig:explanation}
\end{figure}
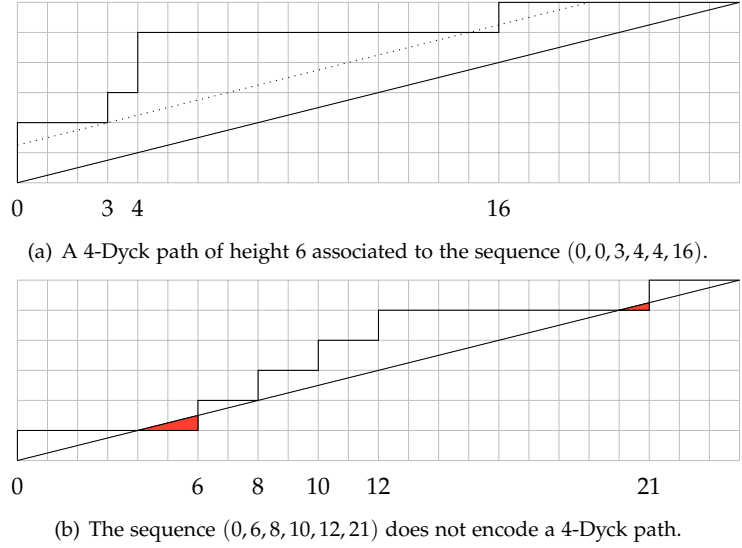

For every $i\in\{1,2,\ldots,n\}$ there exists a unique subsequence $(a_{i},a_{i+1},\ldots,a_{k})$ of
$\mathbf{a}$, called the \alert{primitive subsequence (at position $i$)} that satisfies
\begin{align*}
	& a_{j}-a_{i}<m(j-i),\quad i<j\leq k\quad\text{and}\\
	& \text{either}\quad k=n\quad\text{or}\quad a_{k+1}-a_{i}\geq m(k+1-i).
\end{align*}
The dotted line in Figure~\ref{fig:good_example} indicates that $(3,4,4)$ is the unique primitive 
subsequence at position $3$ in the $4$-Dyck path given there.

We construct a covering relation $\lessdot$ on $\mathcal{D}_{n}^{(m)}$ in the following 
way: let $\mathbf{a},\mathbf{a}'\in\mathcal{D}_{n}^{(m)}$ with $\mathbf{a}=(a_{1},a_{2},\ldots,a_{n})$, and 
let $i\in\{1,2,\ldots,n-1\}$ with $a_{i}<a_{i+1}$. Define 
\begin{multline*}
	\mathbf{a}\lessdot\mathbf{a}'\quad\text{if and only if}\\
	  \mathbf{a}' = (a_{1}\ldots,a_{i},a_{i+1}-1,a_{i+2}-1\ldots,a_{k}-1,a_{k+1},\ldots,a_{n}),
\end{multline*}
where $(a_{i+1},a_{i+2},\ldots,a_{k})$ is the unique primitive subsequence of $\mathbf{a}$ at 
position $i+1$. Denote by $\leq$ the reflexive and transitive closure of $\lessdot$. Proposition~4 in 
\cite{bousquet11number} implies that the poset $\bigl(\mathcal{D}_{n}^{(m)},\leq\bigr)$ is a 
lattice, the \alert{$m$-Tamari lattice $\mtam{n}{m}$}. 

\begin{example}
	Figure~\ref{fig:example_m32} shows the lattice of all $2$-Dyck paths of height $3$ and 
	the associated lattice of sequences as defined in the previous paragraph.

	\begin{figure}
		\centering
		\begin{tikzpicture}\small
			\def\x{.8};
			\def\y{1.2};
			\draw(3*\x,1*\y) node(p0){\mdyck{.4}{.8}};
			\draw(2*\x,2*\y) node(p1){\mdyck{.2}{.8}};
			\draw(1*\x,3*\y) node(p2){\mdyck{0}{.8}};
			\draw(3*\x,3*\y) node(p3){\mdyck{.2}{.6}};
			\draw(2*\x,4*\y) node(p4){\mdyck{0}{.6}};
			\draw(6*\x,4*\y) node(p5){\mdyck{.4}{.6}};
			\draw(3*\x,5*\y) node(p6){\mdyck{0}{.4}};
			\draw(5*\x,5*\y) node(p7){\mdyck{.2}{.4}};
			\draw(7*\x,5*\y) node(p8){\mdyck{.4}{.4}};
			\draw(4*\x,6*\y) node(p9){\mdyck{0}{.2}};
			\draw(6*\x,6*\y) node(p10){\mdyck{.2}{.2}};
			\draw(5*\x,7*\y) node(p11){\mdyck{0}{0}};
			\draw(p0) -- (p1) -- (p2) -- (p4) -- (p6) -- (p9) -- (p11);
			\draw(p0) -- (p5) -- (p7) -- (p10) -- (p11);
			\draw(p1) -- (p3) -- (p4);
			\draw(p3) -- (p7) -- (p9);
			\draw(p5) -- (p8) -- (p10);
			\draw(8*\x,4*\y) node{$\longleftrightarrow$};
			\draw(11*\x,1*\y) node(v0){024};
			\draw(10*\x,2*\y) node(v1){014};
			\draw(9*\x,3*\y) node(v2){004};
			\draw(11*\x,3*\y) node(v3){013};
			\draw(10*\x,4*\y) node(v4){003};
			\draw(14*\x,4*\y) node(v5){023};
			\draw(11*\x,5*\y) node(v6){002};
			\draw(13*\x,5*\y) node(v7){012};
			\draw(15*\x,5*\y) node(v8){022};
			\draw(12*\x,6*\y) node(v9){001};
			\draw(14*\x,6*\y) node(v10){011};
			\draw(13*\x,7*\y) node(v11){000};
			\draw(v0) -- (v1) -- (v2) -- (v4) -- (v6) -- (v9) -- (v11);
			\draw(v0) -- (v5) -- (v7) -- (v10) -- (v11);
			\draw(v1) -- (v3) -- (v4);
			\draw(v3) -- (v7) -- (v9);
			\draw(v5) -- (v8) -- (v10);
		\end{tikzpicture}
		\caption{The lattice of $2$-Dyck paths of height $3$ and the associated lattice
		  of sequences.}
		\label{fig:example_m32}
	\end{figure}
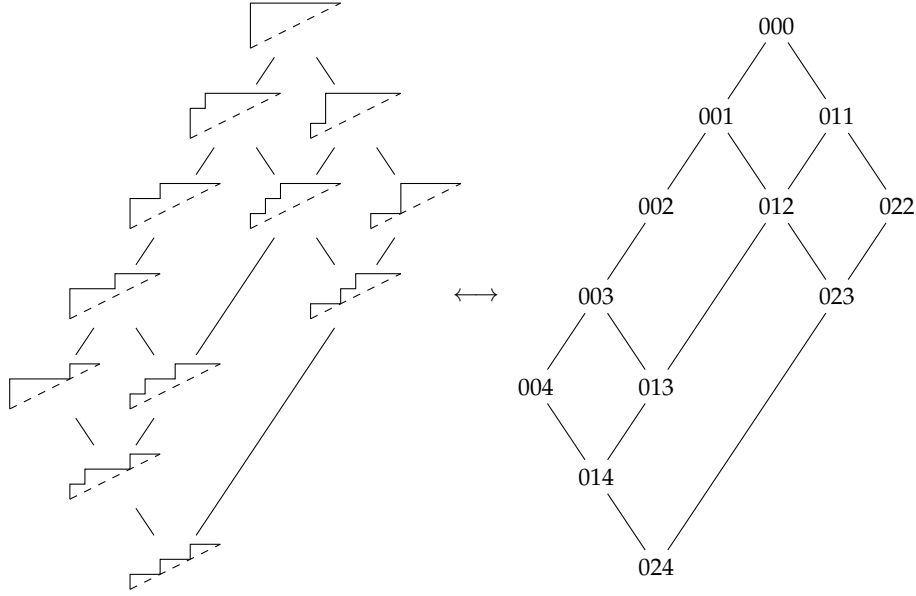
\end{example}

\subsection{EL-Shellability of Posets}
  \label{sec:shellability}
Initially, this property was introduced for graded posets, in order to create an order-theoretic 
tool to investigate shellability of posets \cite{bjorner80shellable}. 
Shellability of a poset implies several topological and order theoretical properties, for example
Cohen-Macaulayness of the associated order complex. More implications of shellability can for 
instance be found in 
\cites{bjorner80shellable, bjorner83lexicographically, bjorner96shellable, bjorner97shellable}. 
In \cite{bjorner96shellable}, Bj\"orner and Wachs generalized EL-shellability to non-graded 
posets. This is the property of interest in the present article.

Let $(P,\leq)$ be a poset. We call a poset \alert{bounded} if it has a unique \mbox{minimal} and a 
unique maximal element, denoted by $\hat{0}$ and $\hat{1}$, respectively. Denote by 
$\mathcal{E}(P)$ the set of all covering relations $p\lessdot q$ in $P$. Hence, 
$\mathcal{E}(P)$ corresponds to the set of edges in the Hasse diagram of $P$. Consider a 
non-singleton interval $[x,y]$ in $P$ and a chain $c:x=p_{0}<p_{1}<\cdots<p_{s}=y$. A chain is called 
\alert{maximal in $[x,y]$} if there are no $q\in P$ and no $i\in\{0,1,\ldots,s-1\}$ such that 
$p_{i}<q<p_{i+1}$. For some poset $\Lambda$, call a map
$\lambda:\mathcal{E}(P)\rightarrow\Lambda$ an \alert{edge-labeling of $P$} and let $\lambda(c)$ 
denote the sequence 
$\bigl(\lambda(p_{0},p_{1}),\lambda(p_{1},p_{2}),\ldots,\lambda(p_{s-1},p_{s})\bigr)$ of 
edge-labels of $c$ with respect to $\lambda$. The chain $c$ is called \alert{rising} if 
$\lambda(c)$ is a strictly increasing sequence. Moreover, $c$ is called 
\alert{lexicographically smaller} than another maximal chain $\tilde{c}$ in the same interval if 
$\lambda(c)<\lambda(\tilde{c})$ with respect to the lexicographic order on $\Lambda^{\ast}$, the 
set of words over the alphabet $\Lambda$. More precisely, the lexicographic order on 
$\Lambda^{\ast}$ is defined as $(p_{1},p_{2},\ldots,p_{s})\leq(q_{1},q_{2},\ldots,q_{t})$ if and 
only if either 
\begin{align*}
	p_{i}=q_{i}, &\quad\text{for}\;1\leq i\leq s\;\text{and}\;s\leq t\quad\text{or}\\
	p_{i}<q_{i}, &\quad\text{for the least}\; i\;\text{such that}\;p_{i}\neq q_{i}.
\end{align*}
An edge-labeling of $P$ is called \alert{EL-labeling} if for every interval 
$[x,y]$ in $P$  there exists a unique rising maximal chain in $[x,y]$, which is 
lexicographically first among all maximal chains in $[x,y]$. A bounded poset that admits an 
EL-labeling is called \alert{EL-shellable}.

\section{EL-Shellability of the $m$-Tamari Lattices}
  \label{sec:m_tamari_shellability}
Bj\"orner and Wachs have shown in \cite{bjorner97shellable}*{Section~9} that the classical 
Tamari lattices are EL-shellable, and \cite{bousquet11number}*{Proposition~4} and 
\cite{bjorner80shellable}*{Proposition~4.2} imply the same for $\mtam{n}{m}$. We will reprove this property,
using the edge-labeling
\begin{equation}\label{eq:labeling}\begin{aligned}
	\lambda:\mathcal{E}\bigl(\mtam{n}{m}\bigr) & \rightarrow\mathbb{N}\times\mathbb{N},\quad
	(\mathbf{a},\mathbf{b}) & \mapsto (j,a_{j}),
\end{aligned}\end{equation}
where $\mathbf{a}=(a_{1},a_{2},\ldots,a_{n}),\mathbf{b}=(b_{1},b_{2},\ldots,b_{n})$, as well as
$j=\min\bigl\{i\in\{1,2,\ldots,n\}\mid a_{i}\neq b_{i}\bigr\}$. 

\begin{example}
	Figure~\ref{fig:tam_32_labeled} shows the Hasse diagram of $\mtam{3}{2}$ together with the 
	edge-labeling defined in \eqref{eq:labeling}.

	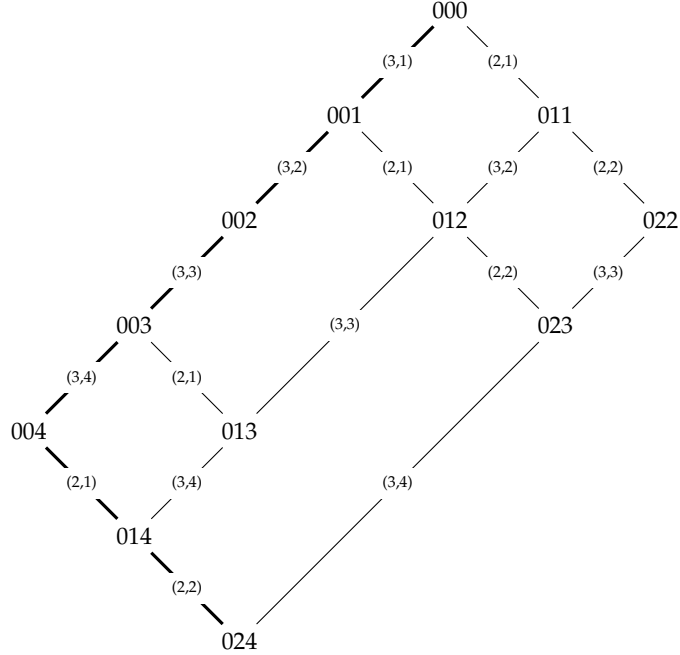
\begin{figure}[htb]
		\centering
		\begin{tikzpicture}\small
			\def\x{1.4};
			\def\y{1.4};
			\draw(3*\x,1*\y) node(p0){024};
			\draw(2*\x,2*\y) node(p1){014};
			\draw(1*\x,3*\y) node(p2){004};
			\draw(3*\x,3*\y) node(p3){013};
			\draw(2*\x,4*\y) node(p4){003};
			\draw(6*\x,4*\y) node(p5){023};
			\draw(3*\x,5*\y) node(p6){002};
			\draw(5*\x,5*\y) node(p7){012};
			\draw(7*\x,5*\y) node(p8){022};
			\draw(4*\x,6*\y) node(p9){001};
			\draw(6*\x,6*\y) node(p10){011};
			\draw(5*\x,7*\y) node(p11){000};
			\draw[very thick](p0) -- (p1) node[fill=white] at (2.5*\x,1.5*\y){\tiny(2,2)};
			\draw(p0) -- (p5) node[fill=white] at (4.5*\x,2.5*\y){\tiny(3,4)};
			\draw[very thick](p1) -- (p2) node[fill=white] at (1.5*\x,2.5*\y){\tiny(2,1)};
			\draw(p1) -- (p3) node[fill=white] at (2.5*\x,2.5*\y){\tiny(3,4)};
			\draw[very thick](p2) -- (p4) node[fill=white] at (1.5*\x,3.5*\y){\tiny(3,4)};
			\draw(p3) -- (p4) node[fill=white] at (2.5*\x,3.5*\y){\tiny(2,1)};
			\draw(p3) -- (p7) node[fill=white] at (4*\x,4*\y){\tiny(3,3)};
			\draw[very thick](p4) -- (p6) node[fill=white] at (2.5*\x,4.5*\y){\tiny(3,3)};
			\draw(p5) -- (p7) node[fill=white] at (5.5*\x,4.5*\y){\tiny(2,2)};
			\draw(p5) -- (p8) node[fill=white] at (6.5*\x,4.5*\y){\tiny(3,3)};
			\draw[very thick](p6) -- (p9) node[fill=white] at (3.5*\x,5.5*\y){\tiny(3,2)};
			\draw(p7) -- (p9) node[fill=white] at (4.5*\x,5.5*\y){\tiny(2,1)};
			\draw(p7) -- (p10) node[fill=white] at (5.5*\x,5.5*\y){\tiny(3,2)};
			\draw(p8) -- (p10) node[fill=white] at (6.5*\x,5.5*\y){\tiny(2,2)};
			\draw[very thick](p9) -- (p11) node[fill=white] at (4.5*\x,6.5*\y){\tiny(3,1)};
			\draw(p10) -- (p11) node[fill=white] at (5.5*\x,6.5*\y){\tiny(2,1)};
		\end{tikzpicture}
		\caption{The diagram of $\mtam{3}{2}$ labeled with the edge labeling as
		  defined in \eqref{eq:labeling}.}
		\label{fig:tam_32_labeled}
	\end{figure}
\end{example}

The main result of this section is stated in the following theorem.

\begin{theorem}
  \label{thm:shelling_m_tamari}
	Let $\mtam{n}{m}$ be the $m$-Tamari lattice of order $n$. The edge-labeling given in
	\eqref{eq:labeling} is an EL-labeling of $\mtam{n}{m}$ with respect to the following 
	order on $\mathbb{N}\times\mathbb{N}$
	\begin{align}\label{eq:ordering}
		(i,a_{i})\leq(j,a_{j})\quad\text{if and only if}\quad
		  i<j,\quad\text{or}\quad i=j\;\text{and}\;a_{i}\geq a_{j}.
	\end{align}
	Moreover, there is at most one falling chain in each interval of $\mtam{n}{m}$. 
\end{theorem}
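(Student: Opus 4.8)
The plan is to verify, for every interval $[\mathbf{x},\mathbf{y}]$ of $\mtam{n}{m}$, the two requirements of an EL-labeling — that there is a unique rising maximal chain and that it is lexicographically first — and then to bound the falling chains. Three structural facts are used throughout. \emph{(i)} A covering step weakly decreases every coordinate, so $\mathbf{x}\leq\mathbf{z}\leq\mathbf{y}$ forces $y_{p}\leq z_{p}\leq x_{p}$ for all $p$; writing $p^{*}$ for the least position with $x_{p^{*}}>y_{p^{*}}$, every element of $[\mathbf{x},\mathbf{y}]$ therefore agrees with $\mathbf{x}$ on positions $1,\dots,p^{*}-1$. \emph{(ii)} A cover $\mathbf{a}\lessdot\mathbf{b}$ obtained from an index $i$ changes exactly the positions spanned by the primitive subsequence at $i+1$, so its label is $(i+1,a_{i+1})$; consequently the upward covers of a fixed $\mathbf{a}$ biject with its ascent indices, distinct covers have distinct first label-coordinates, and a move at index $i$ never alters a position $\leq i$. \emph{(iii)} Because $\mtam{n}{m}$ is a lattice, if $\mathbf{u}\neq\mathbf{v}$ are covers of $\mathbf{w}$ possessing a common upper cover $\mathbf{e}$, then necessarily $\mathbf{e}=\mathbf{u}\vee\mathbf{v}$, so $\mathbf{u},\mathbf{v}\leq\mathbf{y}$ implies $\mathbf{e}\leq\mathbf{y}$.

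The technical core is a local confluence (``diamond'') lemma: if $\mathbf{a}$ has ascents at indices $i<i'$ with corresponding covers $\mathbf{b}$ and $\mathbf{b}'$, then $\mathbf{b}$ has an ascent at $i'$, $\mathbf{b}'$ has an ascent at $i$, and the index-$i'$ move out of $\mathbf{b}$ equals the index-$i$ move out of $\mathbf{b}'$. I would prove this by bookkeeping on the two primitive subsequences: if the primitive subsequence of $\mathbf{a}$ at $i+1$ stops before position $i'$, the two moves act on disjoint blocks and commute trivially; if it reaches past $i'$, one must check that decrementing that block does not alter the relevant primitive subsequences enough to break the identification. This last check is elementary but fiddly — it is where conditions \eqref{eq:mdyck_one} and \eqref{eq:mdyck_two} are genuinely used — and I expect it to be the main obstacle.

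Granting the diamond lemma, existence of a rising chain, together with its lexicographic minimality, follows by a ``bubble sort'': in any maximal chain of $[\mathbf{x},\mathbf{y}]$ a consecutive pair of labels that does not strictly increase must, in the ordering \eqref{eq:ordering}, exhibit a strict decrease of the \emph{first} coordinate — two consecutive moves with a common first coordinate automatically have a strictly decreasing second coordinate, hence a strictly increasing label — and the diamond lemma lets one transpose such a pair of steps through their middle element, producing a maximal chain of $[\mathbf{x},\mathbf{y}]$ with a lexicographically smaller label word. Label words of bounded length over a finite alphabet cannot decrease forever, so the procedure ends at a rising chain; since every maximal chain sorts to a rising one, that rising chain is lexicographically first. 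Uniqueness of the rising chain I would get by induction on $\sum_{p}(x_{p}-y_{p})$: by \emph{(i)}, the first step of any rising chain has first label-coordinate exactly $p^{*}$ (a smaller one is impossible, a larger one would leave position $p^{*}$ frozen forever), and the index-$(p^{*}-1)$ move out of $\mathbf{x}$ is unique, so the first step is forced and one recurses on the interval above it.

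For the last assertion, one first observes that a falling chain has pairwise distinct — hence strictly decreasing — first label-coordinates, since a repeated first coordinate $p$ would mean two consecutive index-$(p-1)$ moves, whose second coordinates decrease rather than increase. I would then prove by induction on $\sum_{p}(x_{p}-y_{p})$ the statement: \emph{$[\mathbf{x},\mathbf{y}]$ contains at most one falling chain, and if it contains one then the first step of that chain carries the largest label among all covers of $\mathbf{x}$ lying below $\mathbf{y}$.} Indeed, suppose $\mathbf{x}\lessdot\mathbf{c}_{1}$ (say at index $a$) begins a falling chain and some cover $\mathbf{x}\lessdot\mathbf{d}\leq\mathbf{y}$ had a strictly larger label; by \emph{(ii)} this forces $\mathbf{d}$ to arise from an index $b>a$, and the diamond lemma together with \emph{(iii)} then yields a common cover $\mathbf{e}=\mathbf{c}_{1}\vee\mathbf{d}\leq\mathbf{y}$ with $\mathbf{c}_{1}\lessdot\mathbf{e}$ coming from the index $b$. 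But the tail of our falling chain is a falling chain of $[\mathbf{c}_{1},\mathbf{y}]$ whose first step has first label-coordinate strictly below $a+1$ (labels of a falling chain strictly decrease), hence strictly below $b+1$, the first coordinate of $\mathbf{c}_{1}\lessdot\mathbf{e}$ — contradicting the inductive hypothesis for $[\mathbf{c}_{1},\mathbf{y}]$. Thus the first step of any falling chain realizes the unique largest label among covers of $\mathbf{x}$ below $\mathbf{y}$; all falling chains then share a first step and, by induction on the interval above it, coincide.
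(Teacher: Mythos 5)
There is a genuine gap: the ``diamond lemma'' that you place at the technical core of the argument is false, and the interval structure of $\mtam{n}{m}$ does not permit any such local confluence. Take $\mtam{3}{2}$ and $\mathbf{a}=(0,2,4)=\hat{0}$, which has ascents at the indices $i=1$ and $i'=2$. The primitive subsequence of $\mathbf{a}$ at position $2$ is just $(2)$ (since $4-2\not<2$), so the index-$1$ move gives $\mathbf{b}=(0,1,4)$, while the index-$2$ move gives $\mathbf{b}'=(0,2,3)$. Applying the index-$2$ move to $\mathbf{b}$ yields $(0,1,3)$; but in $\mathbf{b}'=(0,2,3)$ the primitive subsequence at position $2$ has grown to $(2,3)$, so the index-$1$ move out of $\mathbf{b}'$ yields $(0,1,2)\neq(0,1,3)$. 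Note that this failure occurs precisely in the case you dismiss as ``disjoint blocks commuting trivially'': decrementing a later entry can cause it to be absorbed into an earlier primitive subsequence. Moreover $(0,1,4)\vee(0,2,3)=(0,1,2)$ is not a common upper cover of the two elements (the interval $[(0,2,4),(0,1,2)]$ is a pentagon), so fact \emph{(iii)} cannot be invoked to repair the situation. Consequently the bubble-sort argument for existence and lexicographic minimality of the rising chain breaks down — in the pentagon above, the falling chain $(0,2,4)\lessdot(0,2,3)\lessdot(0,1,2)$ with labels $(3,4),(2,2)$ admits no transposition of its two steps, since $(0,1,2)$ does not cover $(0,1,4)$ — and the induction for uniqueness of the falling chain, which uses the diamond lemma to manufacture a common upper cover $\mathbf{e}=\mathbf{c}_1\vee\mathbf{d}$ covering $\mathbf{c}_1$, collapses for the same reason.

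Your uniqueness argument for the rising chain (the first label must have first coordinate exactly $p^{*}$, since a move altering the frozen position $p^{*}$ must be the ascent move at index $p^{*}-1$, and a rising chain cannot postpone it) is sound and is essentially the paper's own argument. What the paper does instead of local confluence is to construct the rising chain directly and greedily — always decrement at the smallest position still disagreeing with the top element $\mathbf{b}$ — and to observe that each step of this chain takes the minimum label among all covers of the current element, which gives existence and lexicographic minimality without ever comparing two different maximal chains step by step. For the falling chains it likewise identifies the forced label sequence directly rather than by transposition. If you want to keep your architecture, you would need to replace the diamond lemma by an argument that works in non-modular lattices; as stated, the proof does not go through.
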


\begin{proof}
	We need to show that for every interval $[\mathbf{a},\mathbf{b}]$ in $\mtam{n}{m}$ there exists
	exactly one rising chain that is lexicographically first among all maximal chains in 
	$[\mathbf{a},\mathbf{b}]$. 
	Let $\mathbf{a}=(a_{1},a_{2},\ldots,a_{n})$ and 
	$\mathbf{b}=(b_{1},b_{2},\ldots,b_{n})$ and consider the set
	$D=\{j \mid a_{j}\neq b_{j}\}=\{j_{1},j_{2},\ldots,j_{s}\}$. Let the elements of $D$ be 
	listed in increasing order, namely $j_{1}<j_{2}<\cdots<j_{s}$. Let $\mathbf{r}^{(0)}=\mathbf{a}$ 
	and construct $\mathbf{r}^{(i+1)}$ from $\mathbf{r}^{(i)}$ by decreasing the values in the 
	primitive subsequence at position $j_{k}$ of $\mathbf{r}^{(i)}$ by one, where $j_{k}$ is the 
	smallest element of $D$ such that the $j_{k}$-th entry of $\mathbf{r}^{(i)}$ is larger than
	$b_{j_{k}}$. By the minimality of $j_{k}$ it is ensured that $\mathbf{r}^{(i+1)}\leq\mathbf{b}$. 
	Since $\mtam{n}{m}$ is a finite lattice, we obtain $\mathbf{r}^{(t)}=\mathbf{b}$ after a finite 
	number, say $t$, of steps. By construction, it is clear that the chain 
	\begin{equation}\label{eq:rising_chain}
		\mathbf{a}=\mathbf{r}^{(0)}\lessdot\mathbf{r}^{(1)}\lessdot\cdots\lessdot\mathbf{r}^{(t)}
		  =\mathbf{b}
	\end{equation}
	is rising in the interval $[\mathbf{a},\mathbf{b}]$. 

	Let $\mathbf{r}^{(i)}=(r_{1}^{(i)},r_{2}^{(i)},\ldots,r_{n}^{(i)})$. It is guaranteed by 
	construction that 
	\begin{displaymath}
		\lambda\bigl(\mathbf{r}^{(i)},\mathbf{r}^{(i+1)}\bigr)=\bigl(j,r_{j}^{(i)}\bigr)
		  =\min\left\{\lambda\bigl(\mathbf{r}^{(i)},\bar{\mathbf{r}}\bigr)
		  \mid\bigl(\mathbf{r}^{(i)},\bar{\mathbf{r}}\bigr)\in\mathcal{E}\bigl(\mtam{n}{m}\bigr)\right\}. 
	\end{displaymath}
	Since the primitive subsequence of $\mathbf{r}^{(i)}$ at position $j$ is unique, any 
	other covering relation yields a label $\bigl(j',r_{j'}^{(i)}\bigr)$, with 
	$j<j'$. By following such a chain upwards, we will eventually encounter an edge 
	$(\mathbf{s},\mathbf{t})$ such that $\lambda(\mathbf{s},\mathbf{t})=(j,s_{j})$, where
	$\mathbf{s}=(s_{1},s_{2},\ldots,s_{n})$. This creates a descent in such a chain. Hence, 
	the chain in \eqref{eq:rising_chain} is the unique rising chain in $[\mathbf{a},\mathbf{b}]$ and is 
	lexicographically first.

	Now consider the set 
	$D'=\{j\mid a_{j}\neq b_{j}\;\text{and}\;a_{j}\geq a_{j-1}+m\}=\{j_{1},j_{2},\ldots,j_{t}\}$. 
	This means that $k\in D$ implies $a_{k}\neq b_{k}$ and there is no primitive 
	subsequence of $\mathbf{a}$ at position $i<k$, which contains $a_{k}$. Similarly to the previous 
	paragraph, we can see that if there exists a falling chain 
	$\mathbf{a}^{(0)}<\mathbf{a}^{(1)}<\cdots<\mathbf{a}^{(t)}$ in $[\mathbf{a},\mathbf{b}]$, it must 
	have the sequence of edge-labels
	\begin{displaymath}
		\bigl(j_{t},a_{j_{t}}\bigr),\bigl(j_{t-1},a_{j_{t-1}}\bigr),\ldots,\bigl(j_{1},a_{j_{1}}\bigr),
	\end{displaymath}
	since each of the values $a_{j_{1}},a_{j_{2}},\ldots,a_{j_{t}}$ must be decreased along any 
	maximal chain in $[\mathbf{a},\mathbf{b}]$ at least once. Hence, the given chain is the only possible 
	falling chain.
\end{proof}

\section{Applications}
  \label{sec:applications}
According to \cite{bjorner96shellable}, the EL-shellability of $\mtam{n}{m}$ has some  
consequences for the M\"obius function of $\mtam{n}{m}$ and the structure of the 
topological space associated to the intervals of $\mtam{n}{m}$. For an introduction on how to 
associate a topological space to a poset, we refer to \cite{wachs07poset}.

\begin{corollary}
	Let $[\mathbf{a},\mathbf{b}]$ be an interval of $\mtam{n}{m}$ and let $\mu$ the M\"obius function 
	of $\mtam{n}{m}$. Then, $\mu(\mathbf{a},\mathbf{b})\in\{-1,0,1\}$.
\end{corollary}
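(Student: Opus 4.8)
The plan is to derive this corollary directly from the EL-shellability established in Theorem~\ref{thm:shelling_m_tamari}, together with the standard combinatorial formula for the M\"obius function of an EL-shellable poset. Recall from \cite{bjorner96shellable} that if $P$ admits an EL-labeling, then for any interval $[\mathbf{a},\mathbf{b}]$ the M\"obius value $\mu(\mathbf{a},\mathbf{b})$ equals $(-1)^{\ell}$ times the number of falling (strictly decreasing) maximal chains of length $\ell$ from $\mathbf{a}$ to $\mathbf{b}$, for each fixed length $\ell$; more precisely, the reduced Euler characteristic of the order complex of the open interval is computed by the falling chains, and when all falling chains in a given interval have the same length one simply gets $\pm(\text{their number})$. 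The key additional input is the last sentence of Theorem~\ref{thm:shelling_m_tamari}: \emph{there is at most one falling chain in each interval of $\mtam{n}{m}$}.

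First I would invoke the general principle that in an EL-shellable poset the M\"obius function of an interval is $\sum_{\ell}(-1)^{\ell} f_{\ell}$, where $f_{\ell}$ is the number of falling maximal chains of length $\ell$ in that interval. Next I would apply the uniqueness statement: since each interval $[\mathbf{a},\mathbf{b}]$ of $\mtam{n}{m}$ contains at most one falling chain, the sum has at most one nonzero term, and that term is $\pm 1$. Concretely, if $[\mathbf{a},\mathbf{b}]$ has no falling chain then $\mu(\mathbf{a},\mathbf{b})=0$; if it has exactly one falling chain, say of length $\ell$, then $\mu(\mathbf{a},\mathbf{b})=(-1)^{\ell}\in\{-1,1\}$. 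In all cases $\mu(\mathbf{a},\mathbf{b})\in\{-1,0,1\}$, which is the claim.

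The only point requiring a little care — and the step I would flag as the main obstacle — is matching the precise form of the M\"obius formula from \cite{bjorner96shellable} to the (possibly non-graded) setting here: in a non-graded poset the falling chains of an interval need not all have the same length, so a priori one could imagine cancellation or reinforcement among terms $(-1)^{\ell}f_{\ell}$ for different $\ell$. The uniqueness statement in Theorem~\ref{thm:shelling_m_tamari} is exactly what removes this difficulty, since it guarantees $\sum_{\ell} f_{\ell}\le 1$, so at most one $f_{\ell}$ is nonzero and it equals $1$. I would therefore state the argument so that it leans on the ``at most one falling chain'' clause rather than on any gradedness hypothesis, and cite \cite{bjorner96shellable} for the formula expressing $\mu$ of an interval of an EL-shellable poset in terms of its falling chains. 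No genuine computation is needed beyond this; the corollary is a two-line consequence once the formula and the uniqueness of falling chains are in hand.
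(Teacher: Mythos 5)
Your argument is correct, but it follows a different route from the one the paper actually uses for this corollary. The paper's proof is purely by citation: it combines Bj\"orner--Wachs' result that M\"obius values of intervals in the classical Tamari lattice $\mathcal{T}_{n}$ lie in $\{-1,0,1\}$ (\cite{bjorner97shellable}*{Theorem~9.6}) with the fact that $\mtam{n}{m}$ is an interval in $\mathcal{T}_{mn}$ (\cite{bousquet11number}), so that the M\"obius function of $\mtam{n}{m}$ is inherited from the larger lattice. You instead derive the statement internally from Theorem~\ref{thm:shelling_m_tamari}: the general formula for EL-shellable (possibly non-graded) posets expresses $\mu(\mathbf{a},\mathbf{b})$ as $\sum_{\ell}(-1)^{\ell}f_{\ell}$ over falling maximal chains, and the ``at most one falling chain per interval'' clause forces this sum to be $0$ or $\pm 1$. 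This is exactly the alternative derivation the author alludes to in the remark immediately following the corollaries (``our Theorem~\ref{thm:shelling_m_tamari} also reproves these results''), so your proof is both valid and in the spirit of the paper's stated goal of working entirely within the framework of $m$-Dyck paths; what it buys is independence from the embedding of $\mtam{n}{m}$ into the classical Tamari lattice, at the cost of relying on the new labeling rather than a one-line citation. Your handling of the non-graded subtlety (possible cancellation among falling chains of different lengths) is the right point to flag, and the uniqueness clause does dispose of it.
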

\begin{proof}
	This is a consequence of \cite{bjorner97shellable}*{Theorem~9.6},
	\cite{bousquet11number}*{Proposition~4.2} and \cite{bjorner96shellable}*{Theorem~5.7}.
\end{proof}

\begin{corollary}
	Every open interval in $\mtam{n}{m}$ has the homotopy type of either a point or a sphere.
\end{corollary}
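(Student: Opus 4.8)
The plan is to read off the homotopy type from the EL-labeling of Theorem~\ref{thm:shelling_m_tamari} using the standard homotopy-theoretic consequences of lexicographic shellability. Recall from \cite{bjorner96shellable}*{Theorem~5.9} that if a bounded poset admits an EL-labeling, then its proper part is homotopy equivalent to a wedge of spheres containing precisely one sphere of dimension $\ell-2$ for each falling maximal chain $p_{0}\lessdot p_{1}\lessdot\cdots\lessdot p_{\ell}$ of the poset. I would apply this with the bounded poset taken to be a closed interval $[\mathbf{a},\mathbf{b}]$ of $\mtam{n}{m}$: by Theorem~\ref{thm:shelling_m_tamari} the restriction of the labeling \eqref{eq:labeling} to $[\mathbf{a},\mathbf{b}]$ is an EL-labeling, so the order complex of the open interval $(\mathbf{a},\mathbf{b})$ is homotopy equivalent to such a wedge of spheres.

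Next I would invoke the last sentence of Theorem~\ref{thm:shelling_m_tamari}, which asserts that $[\mathbf{a},\mathbf{b}]$ contains at most one falling maximal chain. Hence the wedge above consists of at most one sphere. If $[\mathbf{a},\mathbf{b}]$ has no falling chain, then $(\mathbf{a},\mathbf{b})$ is contractible, so it has the homotopy type of a point; if $[\mathbf{a},\mathbf{b}]$ has a (then unique) falling maximal chain, of length $\ell$, then $(\mathbf{a},\mathbf{b})$ has the homotopy type of the single sphere $S^{\ell-2}$. In the degenerate case $\mathbf{a}\lessdot\mathbf{b}$ the unique maximal chain has length $\ell=1$ and is vacuously falling, so $(\mathbf{a},\mathbf{b})=\emptyset$ is the $(-1)$-sphere, which is again consistent with the statement; for every longer interval the resulting sphere has dimension $\ell-2\geq 0$.

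The only point that needs a word of care is that Theorem~\ref{thm:shelling_m_tamari} is being used for intervals that need not be graded, so one must invoke the version of the wedge-of-spheres theorem valid for EL-labelings of non-graded bounded posets; this is exactly the setting of \cite{bjorner96shellable}, and the single-falling-chain clause of Theorem~\ref{thm:shelling_m_tamari} then immediately collapses the wedge to at most one sphere. No further computation is needed.
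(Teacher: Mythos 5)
Your argument is correct, and it takes the self-contained route rather than the one the paper's proof actually records. The paper proves this corollary purely by citation: it combines \cite{bjorner97shellable}*{Theorem~9.6} (the analogous statement for the classical Tamari lattice $\mathcal{T}_{mn}$), the fact that $\mtam{n}{m}$ is an interval of $\mathcal{T}_{mn}$ from \cite{bousquet11number}, and \cite{bjorner96shellable}*{Theorem~5.9}. You instead feed Theorem~\ref{thm:shelling_m_tamari} directly into \cite{bjorner96shellable}*{Theorem~5.9}: the restriction of the labeling \eqref{eq:labeling} to any interval $[\mathbf{a},\mathbf{b}]$ is an EL-labeling (the definition already quantifies over all intervals), so $(\mathbf{a},\mathbf{b})$ is a wedge of spheres indexed by falling maximal chains, and the ``at most one falling chain'' clause collapses the wedge to a point or a single sphere. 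This is precisely the alternative the paper itself signals in the remark immediately following the corollary (``our Theorem~\ref{thm:shelling_m_tamari} also reproves these results''), and it is also the mechanism behind Theorem~\ref{thm:topology_intervals}. What each approach buys: the paper's citation chain is shorter but leans on the embedding of $\mtam{n}{m}$ into the classical Tamari lattice, whereas your version stays entirely within the $m$-Dyck path framework, which is the stated aim of the article. Your handling of the degenerate interval $\mathbf{a}\lessdot\mathbf{b}$ (empty open interval as a $(-1)$-sphere) is a reasonable convention and does not affect correctness.
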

\begin{proof}
	This is a consequence of \cite{bjorner97shellable}*{Theorem~9.6},
	\cite{bousquet11number}*{Proposition~4.2} and \cite{bjorner96shellable}*{Theorem~5.9}.
\end{proof}

We remark, that our Theorem~\ref{thm:shelling_m_tamari} also reproves these results. We can specify the 
previous results even more and characterize the spherical intervals, analogously to 
\cite{bjorner97shellable}*{Theorem~9.3}. For that purpose, consider 
$\mathbf{a}=(a_{1},a_{2},\ldots,a_{n})\in\mtam{n}{m}$ as well as $D\subseteq\{2,3,\ldots,n\}$ such that 
$a_{j}>a_{j-1}$ for all $j\in D$. Define the numbers 
\begin{multline}\label{eq:counter}
	\ps{\mathbf{a}}{j}=\bigl\lvert\{i\in D\mid i<j\;\text{and}\;
	  a_{j}-1-a_{i}<m(j-i)\;\text{and}\\
	\;a_{k}-a_{i}<m(k-i)\;\text{for all}\;i<k<j\}\bigr\rvert
\end{multline}
for all $j\in D$. Consider the sequence $\mathbf{a}_{j}^{\uparrow}\in\mtam{n}{m}$ that arises from 
$\mathbf{a}$ by decreasing the primitive subsequence of $\mathbf{a}$ at position $j\in D$ by one. 
Clearly, $(\mathbf{a},\mathbf{a}_{j}^{\uparrow})\in\mathcal{E}\bigl(\mtam{n}{m}\bigr)$ and 
$\ps{\mathbf{a}}{j}$ counts the primitive subsequences of $\mathbf{a}_{j}^{\uparrow}$ 
at some $i\in D$ with $i<j$ that contain the $j$-th entry of $\mathbf{a}_{j}^{\uparrow}$. 

\begin{theorem}\label{thm:topology_intervals}
	Let $\mathbf{a},\mathbf{b}\in\mtam{n}{m}$, with $\mathbf{a}\leq\mathbf{b}$, and
	$\mathbf{a}=(a_{1},a_{2},\ldots,a_{n})$ and $\mathbf{b}=(b_{1},b_{2},\ldots,b_{n})$. Let 
	$D=\{j\mid a_{j}\neq b_{j}\;\text{and}\;a_{j}>a_{j-1}\}$. The open interval $(\mathbf{a},\mathbf{b})$ 
	has the homotopy type of a $(\lvert D\rvert-2)$-sphere if
	\begin{align}
	\label{eq:sph_1}
	  a_{j}-1-a_{j-1}<m & \quad\mbox{implies}\quad b_{j}-b_{j-1}<m,
		   \quad\mbox{and}\\
	\label{eq:sph_2}
		&\;b_{j}=a_{j}-1-\ps{\mathbf{a}}{j},
	\end{align}
	for all $j\in D$. Otherwise, $(\mathbf{a},\mathbf{b})$ has the homotopy type of a point.
\end{theorem}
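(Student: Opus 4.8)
The plan is to use the general theory of EL-shellable posets, which by Theorem~\ref{thm:shelling_m_tamari} applies to $\mtam{n}{m}$ with the labeling \eqref{eq:labeling}. The key tool is the standard fact (\cite{bjorner96shellable}*{Theorem~5.9}) that for an EL-shellable bounded poset, the order complex of an open interval $(\mathbf{a},\mathbf{b})$ is homotopy equivalent to a wedge of spheres, indexed by the falling (i.e.\ strictly decreasing) maximal chains of $[\mathbf{a},\mathbf{b}]$, with the sphere corresponding to a falling chain of length $\ell$ having dimension $\ell-2$. Combined with the last sentence of Theorem~\ref{thm:shelling_m_tamari} — there is \emph{at most one} falling chain in each interval — this immediately forces the dichotomy: either there is no falling chain, in which case $(\mathbf{a},\mathbf{b})$ is contractible (a point), or there is exactly one, of some length $\ell$, and then $(\mathbf{a},\mathbf{b})\simeq S^{\ell-2}$. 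So the entire content of the theorem reduces to the purely combinatorial question: \emph{when does a falling maximal chain exist in $[\mathbf{a},\mathbf{b}]$, and when it exists, what is its length?}

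First I would recall, from the proof of Theorem~\ref{thm:shelling_m_tamari}, that any falling chain in $[\mathbf{a},\mathbf{b}]$ must have edge-label sequence $(j_t,a_{j_t}),(j_{t-1},a_{j_{t-1}}),\ldots,(j_1,a_{j_1})$ where $\{j_1<\cdots<j_t\}$ is exactly the set $D'=\{j\mid a_j\neq b_j \text{ and } a_j\ge a_{j-1}+m\}$ of positions at which $a_j$ sits at the \emph{start} of its primitive subsequence. In particular the length of a falling chain, if one exists, is forced to be $\lvert D'\rvert$. So the next step is to identify $\lvert D'\rvert$ with $\lvert D\rvert$ under the hypotheses \eqref{eq:sph_1}–\eqref{eq:sph_2}: I would show that \eqref{eq:sph_1} guarantees that every $j\in D$ is in fact in $D'$ (the positions in $D\setminus D'$ are exactly those $j$ with $a_j-1-a_{j-1}<m$ yet $a_j\neq b_j$; I would argue that such a position would prevent $b_j=b_{j-1}$-compatibility or, more precisely, obstruct the existence of a falling chain — this is where \eqref{eq:sph_1} enters, ensuring the descent structure of $\mathbf{b}$ matches that of $\mathbf{a}$ at these positions), giving $\lvert D'\rvert = \lvert D\rvert$ and hence sphere dimension $\lvert D\rvert-2$.

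Then I would show that \eqref{eq:sph_1}–\eqref{eq:sph_2} together are not just consistent with, but \emph{equivalent} to, the existence of the (necessarily unique) falling chain. The forward direction: assuming \eqref{eq:sph_1}–\eqref{eq:sph_2}, construct the falling chain explicitly by processing the positions of $D$ in \emph{decreasing} order, at each step decreasing the primitive subsequence at the current largest unprocessed position of $D$. Condition \eqref{eq:sph_2} is precisely the bookkeeping that says how far $a_j$ must drop to reach $b_j$: the quantity $\ps{\mathbf{a}}{j}$ counts the primitive subsequences at earlier positions $i\in D$ whose own decrement will additionally pull down entry $j$, so after all the later-and-this positions are processed, entry $j$ has dropped by exactly $1+\ps{\mathbf{a}}{j}$, landing at $a_j-1-\ps{\mathbf{a}}{j}=b_j$; I would verify that each intermediate sequence is a genuine element of $\mtam{n}{m}$ (conditions \eqref{eq:mdyck_one}–\eqref{eq:mdyck_two}) and that each step is a genuine cover whose label is strictly smaller in the order \eqref{eq:ordering} than the previous, i.e.\ the chain is falling. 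For the converse, if a falling chain exists then reading off its labels forces $D'=D$ (giving \eqref{eq:sph_1} by the characterization of $D\setminus D'$) and forces each entry $a_j$ to be decreased exactly $1+\ps{\mathbf{a}}{j}$ times (giving \eqref{eq:sph_2}). Finally, if either \eqref{eq:sph_1} or \eqref{eq:sph_2} fails, no falling chain exists, so the interval is contractible.

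The main obstacle I expect is the careful verification in the forward direction that the greedy "process $D$ from the top" construction stays inside $\mtam{n}{m}$ and really does produce a \emph{falling} chain — in particular, tracking how decreasing a primitive subsequence at a position $j_k$ interacts with the primitive subsequences at smaller positions, and confirming that $\ps{\mathbf{a}}{j}$ correctly counts exactly the extra decrements entry $j$ receives. This requires a clean lemma on how primitive subsequences evolve under a single cover move, analogous to the bookkeeping in the classical case \cite{bjorner97shellable}*{Section~9}, and getting the off-by-one in the sphere dimension ($\lvert D\rvert-2$ rather than $\lvert D\rvert-1$, matching the convention that a chain with $\ell$ edges has $\ell-1$ interior vertices and contributes an $(\ell-2)$-sphere) exactly right.
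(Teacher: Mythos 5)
Your overall architecture coincides with the paper's: invoke the Bj\"orner--Wachs wedge-of-spheres description together with the ``at most one falling chain'' clause of Theorem~\ref{thm:shelling_m_tamari} to reduce everything to the existence and length of a falling maximal chain in $[\mathbf{a},\mathbf{b}]$; construct the candidate chain by decrementing the primitive subsequences at the positions of $D$ in \emph{decreasing} order; and match \eqref{eq:sph_1} with ``every link of the chain stays below $\mathbf{b}$'' and \eqref{eq:sph_2} with ``entry $j$ is decremented exactly $1+\ps{\mathbf{a}}{j}$ times and therefore lands on $b_{j}$.'' This is exactly the paper's proof (and you are, if anything, more explicit than the paper about the topological reduction and the dimension count).

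There is, however, a concrete false step in the middle of your plan. You import from the proof of Theorem~\ref{thm:shelling_m_tamari} the claim that any falling chain has label positions given by $D'=\{j\mid a_{j}\neq b_{j}\;\text{and}\;a_{j}\geq a_{j-1}+m\}$, so that its length is forced to be $\lvert D'\rvert$, and you then propose to prove that \eqref{eq:sph_1} forces $D\subseteq D'$, whence $\lvert D'\rvert=\lvert D\rvert$. That lemma is false. Take $m=2$, $n=3$, $\mathbf{a}=(0,1,3)$, $\mathbf{b}=(0,0,1)$ in $\mtam{3}{2}$: here $D=\{2,3\}$, conditions \eqref{eq:sph_1} and \eqref{eq:sph_2} both hold (with $\ps{\mathbf{a}}{2}=0$ and $\ps{\mathbf{a}}{3}=1$), and the falling chain is $(0,1,3)\lessdot(0,1,2)\lessdot(0,0,1)$ with label sequence $(3,3),(2,1)$, of length $2=\lvert D\rvert$; yet $2\notin D'$ because $a_{2}=1<a_{1}+m=2$, so $\lvert D'\rvert=1$. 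Your argument is thus internally inconsistent: the chain you build in the forward direction has length $\lvert D\rvert$, while your uniqueness/length argument would cap it at $\lvert D'\rvert$. The repair is to abandon $D'$ and argue directly with $D$: in a falling chain the first coordinates of the labels are strictly decreasing, so each position occurs at most once as a leading label position, and one must show that every $j\in D$ does occur exactly once in this role; this pins the label sequence to $\bigl(j_{s},a_{j_{s}}\bigr),\ldots,\bigl(j_{1},a_{j_{1}}\bigr)$ with $\{j_{1}<\cdots<j_{s}\}=D$ and the length to $\lvert D\rvert$, which is what the paper's construction realizes. That verification (not the $D'$ comparison) is where the real work lies.
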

\begin{proof}
	Let $\mathbf{a}=(a_{1},a_{2},\ldots,a_{n})$ and $\mathbf{b}=(b_{1},b_{2},\ldots,b_{n})$ such 
	that $\mathbf{a}\leq\mathbf{b}$. We need to show that a falling chain exists in 
	$[\mathbf{a},\mathbf{b}]$ if and only if the conditions \eqref{eq:sph_1} and \eqref{eq:sph_2} are 
	satisfied. 

	Write the set $D$ in the form $D=\{j_{1},j_{2},\ldots,j_{s}\}$, where 
	$j_{1}<j_{2}<\cdots<j_{s}$. Let $\mathbf{r}^{(0)}=\mathbf{a}$ and construct $\mathbf{r}^{(i+1)}$ from
	$\mathbf{r}^{(i)}$ by decreasing the primitive subsequence at position $j_{s-i}$ of 
	$\mathbf{r}^{(i)}$ by one. It is clear that the chain 
	$\mathbf{r}^{(0)}<\mathbf{r}^{(1)}<\cdots<\mathbf{r}^{(s)}$ is falling. We also notice that 
	$\ps{\mathbf{r}^{(i)}}{j_{k}}=\ps{\mathbf{a}}{j_{k}}$ if $k<s-i$.

	First we show that \eqref{eq:sph_1} is equivalent to 
	$\mathbf{r}^{(k)}\leq\mathbf{b}$ for all $k\in\{1,2,\ldots,s\}$. Assume that there exists a 
	$k\in\{1,2,\ldots,s\}$ such that $a_{j_{k}}-1-a_{j_{k}-1}<m$ and 
	$b_{j_{k}}-b_{j_{k}-1}\geq m$ and $j_{k}$ is maximal in $D$ with respect to this 
	property. Consider the element $\tilde{\mathbf{a}}^{(0)}=
	\bigl(\tilde{a}_{1}^{(0)},\tilde{a}_{2}^{(0)},\ldots,\tilde{a}_{n}^{(0)}\bigr)\in
	\mtam{n}{m}$ that arises from $\mathbf{r}^{(s-k)}$ by decreasing the primitive subsequence of 
	$\mathbf{r}^{(s-k)}$ at position $j_{k}$ by one. Thus, $\tilde{\mathbf{a}}^{(0)}=\mathbf{r}^{(s-k+1)}$. 
	Construct elements $\tilde{\mathbf{a}}^{(i+1)}$ from $\tilde{\mathbf{a}}^{(i)}$ by decreasing 
	the value of the primitive subsequence of $\tilde{\mathbf{a}}^{(i)}$ at position $j_{k}-1$ 
	by one. By assumption, we know that $\tilde{a}_{j_{k}}^{(i)}$ is contained in the 
	primitive subsequence of $\tilde{\mathbf{a}}^{(i)}$ at position $j_{k}-1$. Hence, in each 
	such step, we decrease the value of $\tilde{a}_{j_{k}-1}^{(i)}$ and 
	$\tilde{a}_{j_{k}}^{(i)}$ (and possibly some subsequent entries). After a finite number, 
	say $t$, of steps, we obtain an element 
	$\tilde{\mathbf{a}}^{(t)}=(\tilde{a}_{1}^{(t)},\tilde{a}_{2}^{(t)},\ldots,\tilde{a}_{n}^{(t)})$, 
	such that $\tilde{a}_{j_{k}-1}^{(t)}=b_{j_{k}-1}$. Since $b_{j_{k}}$ is not contained 
	in the primitive subsequence of $\mathbf{b}$ at position $j_{k}-1$, we have 
	$\tilde{a}_{j_{k}}^{(t)}<b_{j_{k}}$, and thus $\mathbf{b}\leq\tilde{\mathbf{a}}^{(t)}$. Certainly,
	there is an $u\in\{0,1,\ldots,t-1\}$ such that $\tilde{a}_{j_{k}}^{(u)}=b_{j_{k}}$, and
	hence $\tilde{a}_{j_{k}-1}^{(u)}>b_{j_{k}-1}$. This implies that 
	$\tilde{a}_{j_{k}-1}^{(i)}>b_{j_{k}-1}$ for all $i\in\{0,1,\ldots,u\}$, and we can 
	conclude that $\tilde{\mathbf{a}}^{(i)}\not\leq\mathbf{b}$. Thus, 
	$\mathbf{r}^{(s-k+1)}\not\leq\mathbf{b}$. The reverse implication is trivial.

	Now we show that \eqref{eq:sph_2} is equivalent to the fact 
	that $(\mathbf{r}^{(k-1)},\mathbf{r}^{(k)})\in\mathcal{E}\bigl(\mtam{n}{m}\bigr)$ for all 
	$k\in\{1,2,\ldots,s\}$. The number $\ps{\mathbf{a}}{j_{k}}$ corresponds to 
	the number of primitive subsequences of 
	$\mathbf{r}^{(1)}=\bigl(r_{1}^{(1)},r_{2}^{(1)},\ldots,r_{n}^{(1)}\bigr)$ at position 
	$i\in D$, where $i<j_{k}$ that contain $r_{j_{k}}^{(1)}$. Hence, along the chain 
	$\mathbf{r}^{(1)}<\mathbf{r}^{(2)}<\cdots<\mathbf{r}^{(s)}$, the value of $r_{j_{k}}^{(1)}$ is 
	decreased exactly $\ps{\mathbf{a}}{j_{k}}$-times and hence 
	$b_{j_{k}}=r_{j_{k}}^{(1)}-\ps{\mathbf{a}}{j_{k}}$. By constructing 
	$\mathbf{r}^{(1)}$, we obtain $r_{j_{k}}^{(1)}=a_{j_{k}}-1$ which implies the claim.

	By combining both properties, we obtain that 
	$\mathbf{r}^{(0)}<\mathbf{r}^{(1)}<\cdots<\mathbf{r}^{(s)}$ is indeed a falling maximal chain 
	from $\mathbf{a}$ to $\mathbf{b}$.
\end{proof}

\begin{corollary}
	Let $\mathbf{a},\mathbf{b}\in\mtam{n}{m}$, with $\mathbf{a}\leq\mathbf{b}$, and
	$\mathbf{a}=(a_{1},a_{2},\ldots,a_{n})$ and $\mathbf{b}=(b_{1},b_{2},\ldots,b_{n})$. Let 
	$D=\{j\mid a_{j}\neq b_{j}\;\text{and}\;a_{j}>a_{j-1}\}$ and let $\mu$ denote
	the M\"obius function of $\mtam{n}{m}$. Then,
	\begin{displaymath}
		\mu(\mathbf{a},\mathbf{b})=\begin{cases}
					(-1)^{\lvert D\rvert}, &\;\text{if conditions}\;
					  \eqref{eq:sph_1}\;\text{and}\;
					  \eqref{eq:sph_2}\;\text{hold,}\\
					0, &\;\text{otherwise}.
		                  \end{cases}
	\end{displaymath}
\end{corollary}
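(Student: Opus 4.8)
The plan is to deduce this corollary directly from Theorem~\ref{thm:topology_intervals} together with the standard relationship between the M\"obius function and the reduced Euler characteristic of the order complex of an open interval. Recall that for any interval $[\mathbf{a},\mathbf{b}]$ in a poset one has $\mu(\mathbf{a},\mathbf{b})=\tilde{\chi}\bigl(\Delta(\mathbf{a},\mathbf{b})\bigr)$, the reduced Euler characteristic of the order complex of the open interval $(\mathbf{a},\mathbf{b})$; this is a classical fact (see e.g.\ \cite{wachs07poset}). Since the Euler characteristic is a homotopy invariant, it suffices to read off the homotopy type from Theorem~\ref{thm:topology_intervals}.

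The argument then splits into the two cases of Theorem~\ref{thm:topology_intervals}. First, if conditions \eqref{eq:sph_1} and \eqref{eq:sph_2} hold, the open interval $(\mathbf{a},\mathbf{b})$ has the homotopy type of a $(\lvert D\rvert-2)$-sphere, so its reduced Euler characteristic is $(-1)^{\lvert D\rvert-2}=(-1)^{\lvert D\rvert}$, giving $\mu(\mathbf{a},\mathbf{b})=(-1)^{\lvert D\rvert}$. One has to be slightly careful with small values of $\lvert D\rvert$: when $\lvert D\rvert=0$ the open interval is empty, so $\mathbf{a}=\mathbf{b}$ (as an interval $[\mathbf{a},\mathbf{b}]$ with $D=\emptyset$ forces $\mathbf{a}=\mathbf{b}$), and $\mu(\mathbf{a},\mathbf{a})=1=(-1)^{0}$, consistent with the convention that the empty complex is a $(-2)$-sphere with $\tilde{\chi}=(-1)^{-2}=1$; when $\lvert D\rvert=1$ the interval is a two-element chain, the open interval is a point, $\tilde\chi=0=(-1)^{1}$ — but note that with $\lvert D\rvert=1$ the sphere/point dichotomy needs a sanity check, which I will address below. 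Second, if \eqref{eq:sph_1} and \eqref{eq:sph_2} fail, the open interval has the homotopy type of a point, whose reduced Euler characteristic is $0$, so $\mu(\mathbf{a},\mathbf{b})=0$.

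The one subtlety to handle carefully is the edge-case $\lvert D\rvert\le 1$, where the naive reading ``sphere of dimension $\lvert D\rvert-2$'' is ambiguous. If $\lvert D\rvert=1$, then $[\mathbf{a},\mathbf{b}]$ is a covering relation: there is a single maximal chain, which is simultaneously rising and falling, so by Theorem~\ref{thm:shelling_m_tamari} conditions \eqref{eq:sph_1} and \eqref{eq:sph_2} must automatically hold (the unique chain is a falling chain), and indeed $\mu(\mathbf{a},\mathbf{b})=-1=(-1)^{1}$ matches the claimed formula. For $\lvert D\rvert\ge 2$ the homotopy-type statement of Theorem~\ref{thm:topology_intervals} is unambiguous and the Euler characteristic computation is immediate. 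The main (and only) obstacle is therefore purely bookkeeping: making sure the convention $\tilde{\chi}(S^{d})=(-1)^{d}$ is applied with the correct dimension offset and that the degenerate cases $\lvert D\rvert\in\{0,1\}$ fit the stated formula; there is no real mathematical content beyond Theorem~\ref{thm:topology_intervals} and the Euler characteristic identity for $\mu$.
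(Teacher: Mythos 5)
Your main line of argument is correct and is essentially the paper's (the paper states this corollary without proof, treating it as immediate from Theorem~\ref{thm:topology_intervals}): by Philip Hall's theorem $\mu(\mathbf{a},\mathbf{b})=\tilde{\chi}\bigl(\Delta(\mathbf{a},\mathbf{b})\bigr)$, and the homotopy types given in Theorem~\ref{thm:topology_intervals} yield $(-1)^{\lvert D\rvert-2}=(-1)^{\lvert D\rvert}$ in the spherical case and $0$ in the contractible case. An equally natural alternative, closer in spirit to how the paper sets things up, is to invoke the Bj\"orner--Wachs formula expressing $\mu$ as a signed count of falling maximal chains, combined with the fact (from Theorem~\ref{thm:shelling_m_tamari} and the proof of Theorem~\ref{thm:topology_intervals}) that there is at most one falling chain, it has length $\lvert D\rvert$, and it exists precisely when \eqref{eq:sph_1} and \eqref{eq:sph_2} hold; this avoids any topology.

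However, your handling of the case $\lvert D\rvert=1$ contains a false claim: it is not true that $\lvert D\rvert=1$ forces $[\mathbf{a},\mathbf{b}]$ to be a covering relation, nor that conditions \eqref{eq:sph_1} and \eqref{eq:sph_2} then hold automatically. Take $m=2$, $n=3$, $\mathbf{a}=(0,2,4)=\hat{0}$ and $\mathbf{b}=(0,2,2)$. Here $D=\{3\}$, but the interval is the three-element chain $(0,2,4)\lessdot(0,2,3)\lessdot(0,2,2)$, its unique maximal chain has the rising label sequence $(3,4),(3,3)$, and condition \eqref{eq:sph_2} fails since $b_{3}=2\neq 3=a_{3}-1-\ps{\mathbf{a}}{3}$; accordingly $\mu(\mathbf{a},\mathbf{b})=0$, not $-1$. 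If your assertion that the conditions ``must automatically hold'' for $\lvert D\rvert=1$ were correct, the formula would wrongly return $-1$ here. The fix is simply to drop this aside: Theorem~\ref{thm:topology_intervals} already covers $\lvert D\rvert=1$ correctly, reading the $(-1)$-sphere as the empty complex (so the interval is a cover and $\mu=-1$) when the conditions hold, and a point (so $\mu=0$) when they do not. Your treatment of $\lvert D\rvert=0$ is fine.
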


\begin{corollary}
  \label{cor:mobius_zero}
	Let $\mathbf{a}\in\mtam{n}{m}$, where $\mathbf{a}=(a_{1},a_{2},\ldots,a_{n})$. Let 
	$D=\{j\mid a_{j}\neq (j-1)m\}$. Let $D_{j}=\{i\in D\mid i<j\}$, and let 
	$\mu$ denote the M\"obius function of $\mtam{n}{m}$. Then,
	\begin{displaymath}
		\mu(\hat{0},\mathbf{a})=\begin{cases}
					(-1)^{\lvert D\rvert}, &\text{if}\;a_{j}=(j-1)m-1-\lvert D_{j}\rvert\;
					  \text{for all}\;j\in D,\\
					0, &\text{otherwise}.
				\end{cases}
	\end{displaymath}
\end{corollary}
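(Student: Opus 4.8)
The plan is to deduce the statement from the preceding corollary (the one computing $\mu(\mathbf{a},\mathbf{b})$ from conditions \eqref{eq:sph_1} and \eqref{eq:sph_2}) applied to the interval $[\hat{0},\mathbf{a}]$, so the first task is to make $\hat{0}$ explicit. Since every covering step only lowers entries, the componentwise-largest sequence $(0,m,2m,\ldots,(n-1)m)$ lies below every element, whence $\hat{0}=(0,m,2m,\ldots,(n-1)m)$. For the interval $[\hat{0},\mathbf{a}]$ the index set used in the preceding corollary is $\{j\mid(\hat{0})_{j}\neq a_{j}\text{ and }(\hat{0})_{j}>(\hat{0})_{j-1}\}$, and because $(\hat{0})_{j}=(j-1)m>(j-2)m=(\hat{0})_{j-1}$ always holds, this is exactly the set $D=\{j\mid a_{j}\neq(j-1)m\}$ of the statement; so the sign $(-1)^{\lvert D\rvert}$ is the right one, and what is left is to show that conditions \eqref{eq:sph_1} and \eqref{eq:sph_2}, read with $\hat{0}$ as the lower endpoint, together become the single equation $a_{j}=(j-1)m-1-\lvert D_{j}\rvert$ for all $j\in D$.

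Condition \eqref{eq:sph_1} is quickly disposed of: with lower endpoint $\hat{0}$ its hypothesis $(\hat{0})_{j}-1-(\hat{0})_{j-1}<m$ reads $m-1<m$ and is always true, so \eqref{eq:sph_1} reduces to $a_{j}-a_{j-1}<m$ for every $j\in D$. I would then check that this is already a consequence of the displayed equation: distinguish whether $j-1\in D$, using the staircase value $a_{j-1}=(j-2)m$ if $j-1\notin D$ and the displayed equation at $j-1$ together with $\lvert D_{j}\rvert-\lvert D_{j-1}\rvert=1$ if $j-1\in D$; in both cases one gets $a_{j}-a_{j-1}\leq m-1$.

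The substance of the argument is the reduction of \eqref{eq:sph_2}. For the interval $[\hat{0},\mathbf{a}]$ it says $a_{j}=(\hat{0})_{j}-1-\ps{\hat{0}}{j}=(j-1)m-1-\ps{\hat{0}}{j}$, so everything comes down to evaluating $\ps{\hat{0}}{j}$ from \eqref{eq:counter} and showing it equals $\lvert D_{j}\rvert$. Here I would exploit that every primitive subsequence of $\hat{0}$ is a singleton, so $\hat{0}_{j}^{\uparrow}$ is simply $\hat{0}$ with its $j$-th entry changed to $(j-1)m-1$; one then counts, along the lines of the interpretation of $\ps{\hat{0}}{j}$ given before Theorem~\ref{thm:topology_intervals}, the indices $i\in D$ with $i<j$ whose primitive subsequence in $\hat{0}_{j}^{\uparrow}$ reaches position $j$, and checks that this count is $\lvert D_{j}\rvert$.

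I expect this last count to be the main obstacle. One has to understand how far the primitive subsequences of $\hat{0}_{j}^{\uparrow}$---and, once the construction is iterated along the candidate falling chain built in the proof of Theorem~\ref{thm:topology_intervals}, of the successive intermediate paths---can propagate after the relevant entries have been lowered, and then match that propagation length against $\lvert D_{j}\rvert$; the unit-step shape of $\hat{0}$ and the bound $a_{i}\leq m(i-1)$ are what should make the matching work, but the index bookkeeping and the boundary cases ($j=\min D$, or several consecutive integers lying in $D$) need care. Once $\ps{\hat{0}}{j}=\lvert D_{j}\rvert$ is established, the two collapsed conditions combine into the displayed equation and the claimed value of $\mu(\hat{0},\mathbf{a})$ follows directly from the preceding corollary.
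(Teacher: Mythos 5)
Your reduction is exactly the paper's: make $\hat{0}=(0,m,2m,\ldots,(n-1)m)$ explicit, observe that the premise of \eqref{eq:sph_1} is then automatic, dispose of its conclusion by the case split on whether $j-1\in D$, and collapse \eqref{eq:sph_2} to the displayed equation via the evaluation $\ps{\hat{0}}{j}=\lvert D_{j}\rvert$. The genuine gap is precisely the step you defer as ``the main obstacle'': you never establish $\ps{\hat{0}}{j}=\lvert D_{j}\rvert$, and the entire content of the corollary sits there. Worse, if you actually carry out the count you describe, it does not come out to $\lvert D_{j}\rvert$. Reading \eqref{eq:counter} literally at $\mathbf{a}=\hat{0}$, the clause $a_{k}-a_{i}<m(k-i)$ already fails at $k=i+1$, since consecutive entries of $\hat{0}$ differ by exactly $m$; equivalently, in $\hat{0}_{j}^{\uparrow}$ only the $j$-th entry has been lowered, so the primitive subsequence at any position $i<j-1$ is still a singleton and never reaches position $j$. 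Hence the literal value is $\ps{\hat{0}}{j}=1$ if $j-1\in D$ and $0$ otherwise, which coincides with $\lvert D_{j}\rvert$ only when $D_{j}\subseteq\{j-1\}$ (in particular always for $n=3$, which is why Figure~\ref{fig:tam_32_labeled} does not reveal the problem).

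This is not a local bookkeeping issue: the discrepancy shows up in small examples. Take $m=1$, $n=4$ and $\mathbf{a}=(0,0,2,2)$, so that $D=\{2,4\}$ and $\lvert D_{4}\rvert=1$. The open interval $\bigl((0,1,2,3),(0,0,2,2)\bigr)$ is the two-element antichain $\{(0,0,2,3),(0,1,2,2)\}$, whence $\mu(\hat{0},\mathbf{a})=1$; Theorem~\ref{thm:topology_intervals} confirms this with the literal value $\ps{\hat{0}}{4}=0$, giving $b_{4}=3-1-0=2$ as required, whereas the displayed condition with $\lvert D_{4}\rvert=1$ would demand $a_{4}=1$ and predict $\mu=0$. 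So the identity you plan to verify is false whenever $D$ has a gap below $j$, and the statement only becomes correct if $\lvert D_{j}\rvert$ is replaced by the literal $\ps{\hat{0}}{j}$, or if $\ps{}{}$ is reinterpreted as counting the decreases of the $j$-th entry along the entire falling chain rather than at the single step from $\hat{0}$ to $\hat{0}_{j}^{\uparrow}$. You were right to identify this count as the crux and to anticipate trouble with the boundary cases; but as written, your plan stalls exactly at that point, and no amount of careful index bookkeeping will make the count equal $\lvert D_{j}\rvert$.
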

\begin{proof}
 	By construction, $\hat{0}=(0,m,2m,\ldots,(n-1)m)$. Hence, the premise 
 	in condition \eqref{eq:sph_1} corresponds to $(j-1)m-1-(j-2)m=m-1<m$ and is always 
 	satisfied. Moreover, we have 
	\begin{displaymath}
		\ps{\hat{0}}{j}=\lvert\{i\in D\mid i<j\}\rvert=\lvert D_{j}\rvert,
	\end{displaymath} 
	for all $j\in D$. If $\mathbf{a}$ satisfies \eqref{eq:sph_2}, then 
	$a_{j}=(j-1)m-1-\ps{\hat{0}}{j}$. There are two possibilities: either $j-1\in D$, or
	$j-1\notin D$.

	Consider the case that $j-1\in D$. Then, 
 	\begin{align*}
		a_{j}-a_{j-1} & =(j-1)m-1-\ps{\hat{0}}{j}-(j-2)m+1+\ps{\hat{0}}{j-1}\\
 		& =m-\ps{\hat{0}}{j}+\ps{\hat{0}}{j-1}\\
		& =m-\lvert D_{j}\rvert+\lvert D_{j-1}\rvert,
 	\end{align*}
	which yields $\lvert D_{j}\rvert-\lvert D_{j-1}\rvert>0$ for the conclusion of 
	\eqref{eq:sph_1}. Since $j-1\in D$, we know that $D_{j-1}\subsetneq D_{j}$ and the 
	conclusion of \eqref{eq:sph_1} is immediately satisfied if $a_{j}=(j-1)m-1-\lvert D_{j}\rvert$ 
	for all $j\in D$.

	Now let $j-1\notin D$. Then, $a_{j-1}=(j-2)m$, and we have
 	\begin{align*}
		a_{j}-a_{j-1} & =(j-1)m-1-\ps{\hat{0}}{j}-(j-2)m\\
 		& =m-1-\ps{\hat{0}}{j}\\
		& =m-1-\lvert D_{j}\rvert,
 	\end{align*}
	which yields $\lvert D_{j}\rvert+1>0$ for the conclusion of \eqref{eq:sph_1}. Since 
	$\lvert D_{j}\rvert\geq 0$, the conclusion of \eqref{eq:sph_1} is immediately satisfied if 
	$a_{j}=(j-1)m-1-\lvert D_{j}\rvert$ for all $j\in D$, which completes the proof.
\end{proof}

\begin{corollary}
  \label{cor:mobius_one}
	Let $\mathbf{a}\in\mtam{n}{m}$, where $\mathbf{a}=(a_{1},a_{2},\ldots,a_{n})$. Let 
	$D=\{j\mid a_{j}>a_{j-1}\}$ and let $\mu$ denote the 
	M\"obius function of $\mtam{n}{m}$. Then
	\begin{displaymath}
		\mu(\mathbf{a},\hat{1})=\begin{cases}
					(-1)^{\lvert D\rvert}, &\;\text{if}\;
					  a_{j}=\ps{\mathbf{a}}{j}+1\;
					  \text{for all}\;j\in D\\
					0, &\;\text{otherwise}.
		                  \end{cases}
	\end{displaymath}
\end{corollary}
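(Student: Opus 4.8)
The plan is to obtain this statement by specializing the M\"obius-function corollary that precedes Corollary~\ref{cor:mobius_zero} (itself an immediate consequence of Theorem~\ref{thm:topology_intervals}) to the interval $[\mathbf{a},\hat{1}]$. The first step is to identify the top element: dually to the identity $\hat{0}=(0,m,2m,\ldots,(n-1)m)$ used in the proof of Corollary~\ref{cor:mobius_zero}, the covering relation $\lessdot$ only decreases entries, the all-zeros sequence $(0,0,\ldots,0)$ lies in $\mtam{n}{m}$, and it is reachable from any $\mathbf{a}$ by iterating $\lessdot$; hence $\hat{1}=(0,0,\ldots,0)$, so that $b_{j}=0$ for all $j$ when $\mathbf{b}=\hat{1}$.

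Next I would check that the hypotheses of that corollary take the form claimed here once $\mathbf{b}=\hat{1}$. Its set $D=\{j\mid a_{j}\neq b_{j}\text{ and }a_{j}>a_{j-1}\}$ reduces to $\{j\mid a_{j}>a_{j-1}\}$: condition \eqref{eq:mdyck_two} forces $a_{1}=0$, monotonicity of $\mathbf{a}$ then gives $a_{j-1}\geq 0$, so $a_{j}>a_{j-1}$ already implies $a_{j}\geq 1>0=b_{j}$; consequently the numbers $\ps{\mathbf{a}}{j}$ occurring in \eqref{eq:sph_2} are computed with respect to exactly the set $D$ of Corollary~\ref{cor:mobius_one}. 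Condition \eqref{eq:sph_1} becomes ``$a_{j}-1-a_{j-1}<m$ implies $0<m$'' and is therefore vacuously true, while \eqref{eq:sph_2} becomes $0=a_{j}-1-\ps{\mathbf{a}}{j}$, i.e.\ $a_{j}=\ps{\mathbf{a}}{j}+1$, for all $j\in D$.

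Substituting these observations into the M\"obius-function corollary gives $\mu(\mathbf{a},\hat{1})=(-1)^{\lvert D\rvert}$ when $a_{j}=\ps{\mathbf{a}}{j}+1$ for all $j\in D$, and $\mu(\mathbf{a},\hat{1})=0$ otherwise, which is precisely the assertion. I do not anticipate a genuine obstacle; the only points requiring care are the identification $\hat{1}=(0,\ldots,0)$ and the verification that deleting the clause ``$a_{j}\neq b_{j}$'' from the definition of $D$ is harmless, the latter resting solely on $a_{1}=0$ and the monotonicity of $\mathbf{a}$.
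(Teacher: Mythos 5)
Your proposal is correct and follows essentially the same route as the paper: identify $\hat{1}=(0,0,\ldots,0)$, note that condition \eqref{eq:sph_1} holds automatically (since $b_{j}-b_{j-1}=0<m$ — strictly speaking it is trivially rather than vacuously true), and observe that \eqref{eq:sph_2} reduces to $a_{j}=\ps{\mathbf{a}}{j}+1$. Your extra check that the clause $a_{j}\neq b_{j}$ may be dropped from the definition of $D$ (because $a_{j}>a_{j-1}\geq 0=b_{j}$) is a detail the paper leaves implicit, and it is handled correctly.
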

\begin{proof}
	It follows from the definition of $\mtam{n}{m}$ that $\hat{1}=(0,0,\ldots,0)$. Hence, 
	for an interval $[\mathbf{a},\hat{1}]$ in $\mtam{n}{m}$ condition \eqref{eq:sph_1} is 
	trivially true for all $j\in D$ and condition \eqref{eq:sph_2} reduces to 
	$a_{j}=\ps{\mathbf{a}}{j}+1$ for all $j\in D$.
\end{proof}

Now we can finally conclude the proof of Theorem~\ref{thm:shellability_m_tamari}.

\begin{proof}[Proof of Theorem~\ref{thm:shellability_m_tamari}]
	This follows by definition from Theorems~\ref{thm:shelling_m_tamari} and \ref{thm:topology_intervals}.
\end{proof}

In the remainder of this section, we prove that the number of spherical intervals of $\mtam{n}{m}$
involving $\hat{0}$ or $\hat{1}$ is $2^{n-1}$ respectively.

\begin{proposition}
  \label{prop:number_spherical_intervals}
	Let $m,n\in\mathbb{N}$. Let 
	$\mathcal{S}_{n}^{(m)}(\hat{0})=\{\mathbf{a}\in\mtam{n}{m}\mid \mu(\hat{0},\mathbf{a})\neq 0\}$ and
	$\mathcal{S}_{n}^{(m)}(\hat{1})=\{\mathbf{a}\in\mtam{n}{m}\mid \mu(\mathbf{a},\hat{1})\neq 0\}$. Then,
	\begin{displaymath}
	  \label{eq:number_sphericals}
		\lvert\mathcal{S}_{n}^{(m)}(\hat{0})\rvert=2^{n-1}
		  =\lvert\mathcal{S}_{n}^{(m)}(\hat{1})\rvert.
	\end{displaymath}
\end{proposition}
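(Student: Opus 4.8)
The plan is to turn both equalities into the claim that the two sets are each in bijection with the power set of $\{2,3,\ldots,n\}$, using the explicit descriptions provided by Corollaries~\ref{cor:mobius_zero} and~\ref{cor:mobius_one}. Throughout I use the fact, available from Theorem~\ref{thm:shelling_m_tamari}, that $\mu(\hat{0},\mathbf{a})\neq 0$ (resp.\ $\mu(\mathbf{a},\hat{1})\neq 0$) precisely when $[\hat{0},\mathbf{a}]$ (resp.\ $[\mathbf{a},\hat{1}]$) admits a falling maximal chain, and that such a chain is then unique.

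For $\mathcal{S}_{n}^{(m)}(\hat{0})$ I attach to $\mathbf{a}=(a_{1},\ldots,a_{n})$ the set $D(\mathbf{a})=\{j : a_{j}\neq(j-1)m\}\subseteq\{2,\ldots,n\}$. By Corollary~\ref{cor:mobius_zero} an element of $\mathcal{S}_{n}^{(m)}(\hat{0})$ is recovered from $D(\mathbf{a})$ — one has $a_{j}=(j-1)m$ for $j\notin D(\mathbf{a})$ and $a_{j}=(j-1)m-1-\lvert D_{j}\rvert$ for $j\in D(\mathbf{a})$ — so $\mathbf{a}\mapsto D(\mathbf{a})$ is injective and $\lvert\mathcal{S}_{n}^{(m)}(\hat{0})\rvert\leq 2^{n-1}$. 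For surjectivity I would show that every $D=\{j_{1}<\cdots<j_{t}\}\subseteq\{2,\ldots,n\}$ occurs, by constructing the associated falling chain out of $\hat{0}=(0,m,\ldots,(n-1)m)$ directly: starting from $\hat{0}$, successively decrease the primitive subsequence at position $j_{t}$, then at $j_{t-1}$, and so on down to $j_{1}$. The crucial point is that when one reaches $j_{k}$ every earlier step has modified only coordinates $>j_{k}$, so coordinate $j_{k}-1$ still carries its $\hat{0}$-value $(j_{k}-2)m<(j_{k}-1)m$; hence the step is a legitimate covering move, and a primitive subsequence based at a coordinate still at its $\hat{0}$-value can spread only over a block of coordinates lying entirely in $D$. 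This shows at once that the chain stays inside $\mathcal{D}_{n}^{(m)}$, that its edge-labels have strictly decreasing first coordinate $j_{t}>\cdots>j_{1}$ (so it is falling), and that the terminal sequence $\mathbf{a}_{D}$ satisfies $D(\mathbf{a}_{D})=D$. Hence $\mathbf{a}\mapsto D(\mathbf{a})$ is a bijection and $\lvert\mathcal{S}_{n}^{(m)}(\hat{0})\rvert=2^{n-1}$.

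For $\mathcal{S}_{n}^{(m)}(\hat{1})$ I would run the dual argument based on Corollary~\ref{cor:mobius_one}: attach to $\mathbf{a}$ the set $D(\mathbf{a})=\{j : a_{j}>a_{j-1}\}$, note that the relations $a_{j}=\ps{\mathbf{a}}{j}+1$ on $D(\mathbf{a})$ and $a_{j}=a_{j-1}$ off it recover $\mathbf{a}$ from $D(\mathbf{a})$ by induction on $j$ (hence injectivity), and then establish surjectivity by building, for each $D=\{j_{1}<\cdots<j_{t}\}$, the falling chain from the desired $\mathbf{a}$ down to $\hat{1}=(0,\ldots,0)$ in reverse: begin at $\hat{1}$ and undo covering moves at positions $j_{1},j_{2},\ldots,j_{t}$. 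One then has to check that each reverse step again lands in $\mathcal{D}_{n}^{(m)}$ and that the element $\mathbf{a}$ produced after the last step has $D(\mathbf{a})=D$, so that the forward chain it determines really terminates at $\hat{1}$.

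I expect the surjectivity step for $\mathcal{S}_{n}^{(m)}(\hat{1})$ to be the main obstacle, and to be noticeably harder than its $\hat{0}$-analogue. In the $\hat{0}$-case the coordinates touched along the falling chain are, at the decisive moment, frozen at their $\hat{0}$-values, which makes the primitive-subsequence bookkeeping automatic; in the $\hat{1}$-case no such freezing is available, the value of $m$ genuinely enters (for $m\geq 2$ there is already no falling chain from $\hat{0}$ to $\hat{1}$, so $\hat{0}\notin\mathcal{S}_{n}^{(m)}(\hat{1})$, even though the count remains $2^{n-1}$), and one must control, coordinate by coordinate and by induction on the reverse steps, how far each primitive subsequence reaches, so as to keep the reconstructed sequence weakly increasing and below the bound $a_{i}\leq m(i-1)$ while forcing its set of ascents to equal $D$. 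Essentially all the real content of the proposition sits in that verification.
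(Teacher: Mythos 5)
Your treatment of $\mathcal{S}_{n}^{(m)}(\hat{0})$ is sound and is essentially the paper's argument: injectivity of $\mathbf{a}\mapsto\{j : a_{j}\neq(j-1)m\}$ from Corollary~\ref{cor:mobius_zero}, surjectivity by running the falling chain out of $\hat{0}$ at the positions of $D$ in decreasing order, using that unmodified coordinates sit exactly on the bound $(j-1)m$ and therefore block primitive subsequences from leaving $D$. (One small point: the endpoint of that chain need not be $\chi_{D}(\hat{0})$ --- consecutive elements of $D$ cause cascading decrements, e.g.\ for $m=1$, $n=3$, $D=\{2,3\}$ the chain $(0,1,2)\lessdot(0,1,1)\lessdot(0,0,0)$ ends at $(0,0,0)$, not $(0,0,1)$ --- but your argument only needs that its difference set from $\hat{0}$ is $D$, which holds.)

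The $\hat{1}$ half, however, is a plan rather than a proof, and you say so yourself: the surjectivity verification, which you correctly identify as carrying essentially all the content, is left undone. Two concrete issues. First, your injectivity argument has a circularity: $\ps{\mathbf{a}}{j}$ depends on $a_{j}$ itself through the condition $a_{j}-1-a_{i}<m(j-i)$, so ``recover $a_{j}$ by induction on $j$ from $a_{j}=\ps{\mathbf{a}}{j}+1$'' is a fixed-point equation, not a recursion; it does have at most one solution (increasing $a_{j}$ weakly decreases $\ps{\mathbf{a}}{j}$, so $a_{j}-\ps{\mathbf{a}}{j}$ is strictly increasing in $a_{j}$), but this needs to be said. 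Second, for surjectivity the paper takes a different and more economical route than your proposed reverse chain construction: it first proves a structural lemma that $\text{diff}_{D}(\hat{1})\neq\emptyset$ only for $D=\emptyset$ or $D=[i,n]$ (i.e.\ any $\mathbf{a}\neq\hat{1}$ in an interval $[\mathbf{a},\hat{1}]$ with nonzero M\"obius function must vanish on an initial segment and be positive thereafter), and then exhibits explicitly, for each $i$ and each subset $\{j_{1}<\cdots<j_{t}\}\subseteq\{i+1,\ldots,n\}$, the staircase sequence that is constant equal to $k+1$ on $[j_{k},j_{k+1})$, together with its falling chain of labels $(j_{t},t+1),\ldots,(i,1)$ down to $\hat{1}$; summing $1+\sum_{i=2}^{n}2^{n-i}=2^{n-1}$ finishes the count. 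Your ascent-set parametrization is compatible with this (the ascent set of the staircase element is $\{i,j_{1},\ldots,j_{t}\}$, and these exhaust the subsets of $\{2,\ldots,n\}$), and your observation that $\hat{0}\notin\mathcal{S}_{n}^{(m)}(\hat{1})$ for $m\geq 2$ correctly signals that the two halves are not symmetric; but until the existence of these elements and their falling chains is actually checked, the second equality in \eqref{eq:number_sphericals} is not established.
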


Consider $\mathbf{a},\mathbf{b}\in\mtam{n}{m}$, with associated sequences
$\mathbf{a}=(a_{1},a_{2},\ldots,a_{n})$ and $\mathbf{b}=(b_{1},b_{2},\ldots,b_{n})$.
Define 
\begin{displaymath}
	D(\mathbf{a},\mathbf{b})=\bigl\{i\in \{2,\ldots,n\}\mid a_{i}\neq b_{i}\bigr\}.
\end{displaymath}
Now, for $\mathbf{a}\in\mtam{n}{m}$ and $D\subseteq\{2,3,\ldots,n\}$, and define
\begin{align}
	\text{diff}_{D}(\mathbf{a})=\{\mathbf{b}\in\mtam{n}{m}\mid D(\mathbf{a},\mathbf{b})=D\}.
\end{align}
It is immediately clear that for every $\mathbf{a}\in\mtam{n}{m}$
\begin{displaymath}
	\mtam{n}{m}=\bigcup_{D\subseteq\{2,3,\ldots,n\}}{\text{diff}_{D}(\mathbf{a})},
\end{displaymath}
and $\text{diff}_{D_{1}}(\mathbf{a})\cap\text{diff}_{D_{2}}(\mathbf{a})=\emptyset$ if and only if 
$D_{1}\neq D_{2}$.

\begin{lemma}
  \label{lem:zero_prep}
	Let $D\subseteq\{2,3,\ldots,n\}$. Then, $\text{diff}_{D}(\hat{0})\neq\emptyset$. 
\end{lemma}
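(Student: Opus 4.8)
The plan is to construct an explicit element of $\text{diff}_{D}(\hat{0})$, so that this set is visibly nonempty. Recall from the proof of Corollary~\ref{cor:mobius_zero} that $\hat{0}=(0,m,2m,\ldots,(n-1)m)$. Given $D\subseteq\{2,3,\ldots,n\}$, I would take the sequence $\mathbf{b}=(b_{1},b_{2},\ldots,b_{n})$ defined by $b_{j}=(j-1)m$ for $j\notin D$ and $b_{j}=(j-1)m-1$ for $j\in D$; in other words, $\mathbf{b}$ is obtained from $\hat{0}$ by lowering exactly the entries indexed by $D$ by one. Since $1\notin D$, we have $b_{1}=0$, as is forced for any element of $\mathcal{D}_{n}^{(m)}$.

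First I would check that $\mathbf{b}\in\mtam{n}{m}$, that is, that $\mathbf{b}$ satisfies \eqref{eq:mdyck_one} and \eqref{eq:mdyck_two}. The bound \eqref{eq:mdyck_two} is immediate, since $b_{j}\leq(j-1)m=m(j-1)$ in both cases. For the weakly increasing condition \eqref{eq:mdyck_one} I would compute $b_{j}-b_{j-1}$ in the four cases according to the membership of $j-1$ and $j$ in $D$; the difference comes out as $m$, $m+1$, $m-1$, or $m$ respectively, and each of these is nonnegative because $m\geq 1$. Second, I would verify that $D(\hat{0},\mathbf{b})=D$: for $j\notin D$ the $j$-th entries of $\hat{0}$ and $\mathbf{b}$ both equal $(j-1)m$, so $j\notin D(\hat{0},\mathbf{b})$; for $j\in D$ they are $(j-1)m$ and $(j-1)m-1$, which differ since $(j-1)m\geq 1$ (as $j\geq 2$ and $m\geq 1$), so $j\in D(\hat{0},\mathbf{b})$. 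Hence $\mathbf{b}\in\text{diff}_{D}(\hat{0})$, which is therefore nonempty.

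This argument is entirely routine, and I do not expect a genuine obstacle; the only point requiring a moment's attention is the monotonicity check in the case $j-1\notin D$, $j\in D$, where the increment drops to $m-1$ and one must invoke $m\geq 1$. Conceptually, what makes the construction work is precisely that consecutive entries of $\hat{0}$ differ by $m\geq 1$, so lowering an arbitrary set of entries by one can neither break the weakly increasing condition nor exceed the ceiling $m(j-1)$.
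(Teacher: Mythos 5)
Your construction is exactly the paper's: the paper defines $\chi_{D}(\hat{0})=\bigl(0,m-\chi_{D}(2),2m-\chi_{D}(3),\ldots,(n-1)m-\chi_{D}(n)\bigr)$ via the indicator function of $D$, which is precisely your $\mathbf{b}$, and likewise invokes $m\geq 1$ to see it is still an $m$-Dyck path. Your write-up merely spells out the case check that the paper leaves implicit; the argument is correct and essentially identical.
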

\begin{proof}
	Let $D\subseteq\{2,3,\ldots,n\}$. By construction, $\hat{0}=(0,m,2m,\ldots,(n-1)m)$. Consider the 
	indicator function
	\begin{displaymath}
		\chi_{D}:\{2,3,\ldots,n\}\rightarrow\{0,1\},\quad i\mapsto\begin{cases}
			1, & \text{if}\;i\in D,\\
			0, & \text{otherwise}.\end{cases}
	\end{displaymath}
	Since $m\geq 1$, it is clear that the sequence 
	\begin{displaymath}
		\chi_{D}(\hat{0})=\bigl(0,m-\chi_{D}(2),2m-\chi_{D}(3),\ldots,(n-1)m-\chi_{D}(n)\bigr)
	\end{displaymath}
	corresponds to an $m$-Dyck path again.
\end{proof}

Let $[i,j]$ denote the interval $\{i,i+1,\ldots,j\}$ for $1\leq i\leq j\leq n$.

\begin{lemma}
	Let $i\in\{2,3,\ldots,n\}$. Then, $\text{diff}_{D}(\hat{1})\neq\emptyset$ if and only if 
	$D=[i,n]$ or $D=\emptyset$.
\end{lemma}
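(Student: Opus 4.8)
The plan is to unwind the definition of $\text{diff}_{D}(\hat{1})$ using the fact, recorded in the proof of Corollary~\ref{cor:mobius_one}, that $\hat{1}=(0,0,\ldots,0)$. For any $\mathbf{b}=(b_{1},b_{2},\ldots,b_{n})\in\mtam{n}{m}$ the entries are nonnegative and $b_{1}\leq 0$ by \eqref{eq:mdyck_two}, so $b_{1}=0$, and $b_{1}\leq b_{2}\leq\cdots\leq b_{n}$ by \eqref{eq:mdyck_one}. Consequently
\[
	D(\hat{1},\mathbf{b})=\{i\in\{2,3,\ldots,n\}\mid b_{i}\neq 0\}=\{i\in\{2,3,\ldots,n\}\mid b_{i}>0\}.
\]

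For the forward implication I would use monotonicity of $\mathbf{b}$: the set of positions carrying a positive value is closed upwards, since $b_{i}>0$ and $j>i$ force $b_{j}\geq b_{i}>0$. Hence, if $\mathbf{b}\neq\hat{1}$ and $i$ denotes the least index with $b_{i}>0$ (which satisfies $i\geq 2$ because $b_{1}=0$), then $D(\hat{1},\mathbf{b})=\{i,i+1,\ldots,n\}=[i,n]$; and if $\mathbf{b}=\hat{1}$ then $D(\hat{1},\mathbf{b})=\emptyset$. This shows that $\text{diff}_{D}(\hat{1})\neq\emptyset$ implies $D=\emptyset$ or $D=[i,n]$ for some $i\in\{2,3,\ldots,n\}$.

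For the converse, $D=\emptyset$ is witnessed by $\mathbf{b}=\hat{1}$ itself. For $D=[i,n]$ with $i\in\{2,3,\ldots,n\}$ I would exhibit the sequence $\mathbf{b}$ with $b_{j}=0$ for $j<i$ and $b_{j}=1$ for $j\geq i$: it is weakly increasing, and $b_{j}\leq m(j-1)$ holds trivially for $j<i$ and reduces for $j\geq i\geq 2$ to $1\leq m(j-1)$, which is true since $m\geq 1$. Thus $\mathbf{b}\in\mathcal{D}_{n}^{(m)}$ and $D(\hat{1},\mathbf{b})=[i,n]$, so $\mathbf{b}\in\text{diff}_{[i,n]}(\hat{1})$; this witness is of the same flavour as the one built in Lemma~\ref{lem:zero_prep}. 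I do not expect any real obstacle here: the only points requiring a moment's care are the behaviour at position $1$, which makes $1\notin D(\hat{1},\mathbf{b})$ automatic and hence consistent with $D\subseteq\{2,3,\ldots,n\}$, and the routine check that the witness sequence genuinely satisfies \eqref{eq:mdyck_one} and \eqref{eq:mdyck_two}, which uses only $m\geq 1$.
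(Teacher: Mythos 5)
Your proof is correct and takes essentially the same route as the paper: both directions rest on $\hat{1}=(0,0,\ldots,0)$ together with the monotonicity condition \eqref{eq:mdyck_one}, and you exhibit the same witness $(0,\ldots,0,1,\ldots,1)$ for $D=[i,n]$. If anything, your forward direction, via upward-closedness of the set of positions with positive entry, is argued a little more cleanly than the paper's, which only treats explicitly the case where the last entry is forced to be $0$.
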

\begin{proof}
	By construction, $\hat{1}=(0,0,\ldots,0)$ is the sequence associated to $\hat{1}$. 
	If $D\subseteq\{2,3,\dots,n\}$ and $D\neq[i,n]$ for some $2\leq i\leq n$, then any element 
	in $\text{diff}_{D}(\hat{1})$ has to correspond to a sequence of the form 
	$(0,b_{2},b_{3},\ldots,b_{n-1},0)$. By definition it is clear that this encodes an $m$-Dyck path
	only in the case $b_{2}=b_{3}=\cdots=b_{n-1}=0$. Hence, $D=\emptyset$. 

	Now let $D=[i,n]$ for some $2\leq i\leq n$. Consider for instance the sequence 
	\begin{displaymath}
		\mathbf{a}=(\underbrace{0,0,\ldots,0}_{i-1},\underbrace{1,1,\ldots,1}_{n-i+1}).
	\end{displaymath}
	This sequence clearly encodes an $m$-Dyck path, and hence 
	$\text{diff}_{D}(\hat{1})\neq\emptyset$.
\end{proof}

\begin{proof}[Proof of Proposition~\ref{prop:number_spherical_intervals}]
	First we show that $\bigl\lvert\mathcal{S}_{n}^{(m)}(\hat{0})\bigr\rvert=2^{n-1}$. For that, let 
	$D=\{j_{1},j_{2},\ldots,j_{t}\}\subseteq\{2,3\ldots,n\}$, with $j_{1}<j_{2}<\cdots<j_{t}$. 
	Let $\mathbf{a}^{(0)}=\hat{0}$, and construct $\mathbf{a}^{(i+1)}$ from $\mathbf{a}^{(i)}$ by 
	reducing the primitive subsequence of $\mathbf{a}^{(i)}$ at position $j_{t-i}$ by one. Let 
	$\mathbf{a}^{(i)}=(a_{1}^{(i)},a_{2}^{(i)},\ldots,a_{n}^{(i)})$. 
	It is clear by construction that $a_{k}^{(i)}=a_{k}^{(0)}$ for all $k<j_{t-i+1}$. Hence, we 
	obtain a falling chain 
	$\mathbf{a}^{(0)}\lessdot\mathbf{a}^{(1)}\lessdot\cdots\lessdot\mathbf{a}^{(t)}$, 
	where $\mathbf{a}^{(t)}\in\text{diff}_{D}(\hat{0})$. (In fact, $\mathbf{a}^{(t)}$ corresponds 
	to $\chi_{D}(\hat{0})$ as defined in the proof of Lemma~\ref{lem:zero_prep}.) It follows from the 
	proof of Theorem~\ref{thm:shelling_m_tamari} that if $\mathbf{a}\in\text{diff}_{D}(\hat{0})$ and 
	there exists a falling maximal chain from $\hat{0}$ to $\mathbf{a}$, then $\mathbf{a}=\mathbf{a}^{(t)}$. 
	Hence, for every $D\subseteq\{2,3,\ldots,n\}$ we obtain exactly one 
	$\mathbf{a}\in\text{diff}_{D}(\hat{0})$, which yields 
	$\lvert\mathcal{S}_{n}^{(m)}(\hat{0})\rvert=2^{n-1}$.

	Now we show that $\bigl\lvert\mathcal{S}_{n}^{(m)}(\hat{1})\bigr\rvert=2^{n-1}$. Let $D=[i,n]$ 
	for some $2\leq i\leq n$, and let $\{j_{1},j_{2},\ldots,j_{t}\}\subseteq \{i+1,i+2,\ldots,n\}$ 
	with $i<j_{1}<j_{2}<\cdots<j_{t}$, and define $j_{0}=i$. Consider 
	$\mathbf{a}\in\text{diff}_{D}(\hat{1})$ with 
	\begin{displaymath}
		\mathbf{a}=(\underbrace{0,0,\ldots,0}_{j_{0}-1},
		  \underbrace{1,1\ldots,1}_{j_{1}-j_{0}},
		  \underbrace{2,2,\ldots,2}_{j_{2}-j_{1}},\ldots,
		  \underbrace{t+1,t+1,\ldots,t+1}_{n-j_{t}+1}).
	\end{displaymath}
	Let $\mathbf{a}^{(0)}=\mathbf{a}$ and construct $\mathbf{a}^{(i+1)}$ from $\mathbf{a}^{(i)}$ by 
	reducing the value of the primitive subsequence of $\mathbf{a}^{(i)}$ at position $j_{t-i}$ by 
	one. We obtain a chain 
	$\mathbf{a}^{(0)}\lessdot\mathbf{a}^{(1)}\lessdot\cdots\lessdot\mathbf{a}^{(t+1)}$, of 
	length $t+1$ with label sequence $\bigl((j_{t},t+1),(j_{t-1},t),\ldots,(j_{0},1)\bigr)$. 
	Moreover, $\mathbf{a}^{(t+1)}=\hat{1}$. This implies that $\mu(p,\hat{1})=(-1)^{t+1}$.
	Hence, the total number of elements $\mathbf{a}\in\text{diff}_{D}(\hat{1})$ satisfying
	$\mu(\mathbf{a},\hat{1})\neq 0$ is $2^{n-i}$. This implies 
	\begin{displaymath}
		\lvert\mathcal{S}_{n}^{(m)}(\hat{1})\rvert = 1+\sum_{i=2}^{n}{2^{n-i}}
		   = 1+\sum_{i=0}^{n-2}{2^{i}}
		   = 1+2^{n-1}-1
		   = 2^{n-1}.
	\end{displaymath}
\end{proof}

\section*{Acknowledgements}
The author is very grateful to two anonymous referees for their suggestions and helpful comments. 

\bibliography{../../literature}

\end{document}